\newtheorem{theorem}{Theorem}[section]
\newtheorem{corollary}[theorem]{Corollary}
\newtheorem{proposition}[theorem]{Proposition}
\newtheorem{lemma}[theorem]{Lemma}
\theoremstyle{definition}
\newtheorem{example}[theorem]{Example}
\newtheorem{remark}[theorem]{Remark}
\newcommand{\leqnomode}{\tagsleft@true}
\newcommand{\reqnomode}{\tagsleft@false}
\DeclareMathOperator{\Aut}{Aut}
\DeclareMathOperator{\Cent}{Cent}
\DeclareMathOperator{\Perm}{Perm}
\newcommand{\gen}[1]{\langle #1 \rangle} 
\newcommand{\B}{\mathfrak{B}}
\newcommand{\id}{\mathrm{id}}
\numberwithin{equation}{section}
\begin{document}
\newtheorem*{thm}{Theorem}

\title{Abelian maps, bi-skew braces, and opposite pairs of {H}opf-{G}alois structures}
\author{Alan Koch}
\address{Department of Mathematics, Agnes Scott College, 141 E. College Ave., Decatur, GA\ 30030 USA \\akoch@agnesscott.edu}
\date{\today       }

\begin{abstract} Let $G$ be a finite nonabelian group, and let $\psi:G\to G$ be a homomorphism with abelian image. We show how $\psi$ gives rise to two Hopf-Galois structures on a Galois extension $L/K$ with Galois group (isomorphic to) $G$; one of these structures generalizes the construction given by a ``fixed point free abelian endomorphism'' introduced by Childs in 2013. We construct the skew left brace corresponding to each of the two Hopf-Galois structures above. We will show that one of the skew left braces is in fact a bi-skew brace, allowing us to obtain four set-theoretic solutions to the Yang-Baxter equation as well as a pair of Hopf-Galois structures on a (potentially) different finite Galois extension.  
	\end{abstract}

\maketitle

\section{Introduction}

Let $G$ be a finite nonabelian group, and let $L/K$ be a Galois extension with Galois group $G$. In \cite{Childs13} Childs introduces the notion of a {\it fixed point free abelian endomorphism} of $G$. Given such a map $\psi:G\to G$ one can endow $L/K$ with a Hopf-Galois structure. Childs furthermore provides a criterion to determine when two different choices of fixed point free abelian endomorphism yield the same Hopf-Galois structure. 

In this work, we introduce a generalization to the above theory. By adjusting how an endomorphism gives rise to a Hopf-Galois structure we are able to improve upon the results in \cite{Childs13} in several meaningful ways. First, we are able to drop the fairly restrictive ``fixed point free'' condition, thereby obtaining a larger family of Hopf-Galois structures. Indeed, under Childs's classification, each Hopf-Galois structure emanates from the same Hopf algebra; our generalization allows for more Hopf algebras to act on $L/K$. Second, we simplify the criterion to determine whther endomorphisms give the same Hopf-Galois structure. Finally, in most circumstances (including every case which arises from a fixed point free abelian endomorphism) we are able to find a second Hopf-Galois structure on $L/K$ which is related to the first. While this second structure has been well-known since 1987 (see \cite{GreitherPareigis87}), Childs's theory lacks an explicit way to describe it; using our modified correspondence this structure becomes transparent.

The past five years have seen a resurgence in Hopf-Galois theory on Galois extensions due to their relationship with set-theoretic solutions to the Yang-Baxter equation. Guarnieri and Vendramin \cite{GuarnieriVendramin17} introduced the notion of skew left braces to provide non-degenerate set-theoretic solutions to this equation, building on the work of Rump, who in \cite{Rump07} developed braces to find solutions that are also involutive. In \cite{Bachiller16} a connection is given between regular, $G$-stable subgroups of $\Perm(G)$ and skew left braces, making the bridge from Hopf-Galois structures and solutions to the Yang-Baxter equation complete: any Hopf-Galois structure on a Galois extension gives rise to such a solution. 

Here, given an abelian map $\psi$ we construct what Childs in \cite{Childs19} calls a {\it bi-skew brace}. A bi-skew brace is a skew left brace which remains a skew left brace upon interchanging the two binary operations. That the skew left brace is bi-skew has several consequences. First, in the theory of Hopf-Galois extensions a bi-skew brace gives rise to two more Hopf-Galois structures, generally on a Galois extension with a different Galois group (that is, one not isomorphic to $G$). Second, a bi-skew brace gives more solutions to the Yang-Baxter equation; using the theory of skew brace opposites as developed independently by Rump in \cite{Rump19} and the author with Truman in \cite{KochTruman20}, a single abelian endomorphism can give up to four different set-theoretic solutions to the Yang-Baxter equation. Two of these solutions appear in \cite{KochStordyTruman20} in the fixed point free case. 

After a quick survey of the basic background material, we introduce abelian maps. Theorem \ref{main} establishes the regular, $G$-stable subgroup associated to an abelian map, while corollary \ref{main2} gives the opposite structure. The connection to \cite{Childs13} is given, along with a secondary link to cases where $G$ decomposes as an internal semidirect product as in \cite{CrespoRioVela16}. The bi-skew brace corresponding to an abelian map is also given, along with (up to) four solutions to the Yang-Baxter equation. We will also find all abelian maps on the symmetric groups $S_n,\;n\ge 5$, the metacyclic groups $M_{p,q}$ of order $pq$ with $p,q$ prime, and all dihedral groups $D_n$.

Throughout, $G$ is a finite, nonabelian group with center $Z(G)$, and $L/K$ is a Galois extension with Galois group $G$. We denote by $C_n$ the cyclic group of order $n$, written multiplicatively.

\section{Background}

Here, we will provide much of the background needed for the subsequent sections.

\subsection{Hopf Galois extensions and Greither-Pareigis theory} 
In \cite{GreitherPareigis87}, Greither and Pareigis develop a powerful theory to find Hopf-Galois structures which we shall briefly outline here--see, e.g., \cite{Childs00} for a detailed treatment. 

Let $\Perm(G)$ denote the group of permutations of $G$. For $\eta\in\Perm(G)$ we will denote the image of $g\in G$ under $\eta$ by $\eta[g]$. We say a subgroup $N\le\Perm(G)$ is {\it regular} if for all $g,h\in G$ there is a unique $\eta\in N$ such that $\eta[g]=h$. 
The simplest examples of such subgroups are the image of $G$ under left regular representation $\lambda:G\to \Perm(G)$ and right regular representation $\rho:G\to \Perm(G)$. Notice that these two regular subgroups commute with each other: $\lambda(g)\rho(h)=\rho(h)\lambda(g)$ for all $g,h\in G$.

Clearly, $\lambda(G)$ acts on $\Perm(G)$ via conjugation, i.e. $(\lambda(g),\eta)\mapsto \;^g\eta:=\lambda(g)\eta\lambda(g^{-1})\in \Perm(G)$ for $g\in G,\; \eta\in \Perm(G)$. A subgroup $N\le \Perm(G)$ is said to be {\it $G$-stable} if $^g\eta\in N$ for all $g\in G,\; \eta\in N$. Note that since $^g\lambda(h)=\lambda(ghg^{-1})\in\lambda(G)$ and $^g\rho(h)=\rho(g)\in\rho(G)$ both $\lambda(G)$ and $\rho(G)$ are $G$-stable.

Suppose $N\le\Perm(G)$ is regular and $G$-stable. By \cite[Lemma 2.4.2]{GreitherPareigis87} the subgroup $N'=\Cent_{\Perm(G)}(N)=\{\eta'\in N': \eta\eta'=\eta'\eta \text{ for all }\eta\in N\}$ is regular, $G$-stable, and is isomorphic to $N$. We will call this the {\it opposite subgroup} to $N$, terminology which is justified by the fact that there is a canonical isomorphism $N'\to N^{\text{opp}}$ \cite[Lemma 2.4.2]{GreitherPareigis87}. As a simple example, $\lambda(G)'=\rho(G)$.

In \cite{GreitherPareigis87} a one-to-one correspondence between Hopf-Galois structures on $L/K$ and regular, $G$-stable subgroups is given.  For $N\le \Perm(G)$ the corresponding $K$-Hopf algebra is the fixed ring $L[N]^G$, where $g\in G$ acts on $N$ via conjugation via $\lambda(g)$ and on $L$ through the Galois action. Furthermore, we will call the Hopf-Galois structure obtained from $N'$ the {\it opposite Hopf-Galois structure} to $L[N]^G$.  

Taking $N=\rho(G)$ gives the usual Galois action. Taking $N=\lambda(G)$ produces what is called the {\it canonical nonclassical} Hopf Galois structure in  \cite{Truman16}. This structure will be of particular importance here and we shall denote its Hopf algebra by $H_{\lambda}$.

Generally, for $N\le\Perm(G)$ regular, $G$-stable, the corresponding Hopf-Galois structure with Hopf algebra  $H=L[N]^G$ is said to be of {\it type $N$}.
Note that $\lambda(G)$ and $\rho(G)$ are evidently of type $G$.

\subsection{Fixed point free abelian endomorphisms}

Childs's use of fixed point free abelian endomorphism in \cite{Childs13} provides a useful construction of regular, $G$-stable subgroups. Here, an endomorphism $\psi:G\to G$ is said to be {\it fixed point free} if for every nontrivial $g\in G$ we have $\psi(g)\ne g$; and
{\it abelian} if $\psi(G)\le G$ is abelian. For brevity, we will often refer to our endomorphisms as ``maps''. Note that an abelian map is constant on conjugacy classes.
A classification of fixed point free abelian maps on certain classes of finite groups is well understood: see \cite{Childs13,Caranti13,KochStordyTruman20}.

Given a fixed point free abelian endomorphism $\psi:G \to G$, we let
\[N=N_{\psi}=\{\lambda(g)\rho(\psi(g)):g\in G\}.\]
It is easy to verify that $N$ is a regular, $G$-stable subgroup of $\Perm(G)$. Furthermore, $N\cong G$ via the map $\lambda(g)\rho(\psi(g))\mapsto g$.

By \cite[Th. 2]{Childs13} we know that $N_{\psi_1}=N_{\psi_2}$ if and only if there is a fixed point free homomorphism $\zeta:G\to Z(G)$ (necessarily abelian, of course) such that 
$\psi_2(g) = \psi(g\zeta(g^{-1})\zeta(g)$
for all $g\in G$. 

As the $N$ constructed above is regular and $G$-stable, $\psi:G\to G$ gives a Hopf-Galois structure on $L/K$. As mentioned in \cite{Childs13} and explored in greater detail in \cite{KochKohlTrumanUnderwood19c}, we have $(L[N])^G\cong H_{\lambda}$ as $K$-Hopf algebras, however the precise action on $L$ will be different from the canonical nonclassical action unless $\psi(G)\le Z(G)$.

\subsection{Braces and the Yang-Baxter equation}
Regular, $G$-stable subgroups allow us to construct set-theoretic solutions to the Yang-Baxter equations via skew left braces. 

At present time, there is not a standard notation for skew left braces. We will mostly follow the notation in \cite{GuarnieriVendramin17}, writing $\B=(B,\cdot,\circ)$ for the skew left brace, where $(B,\cdot)$ and $(B,\circ)$ are groups and, for all $x,y,z\in B$, 
$x\circ(y\cdot z) =(x\circ y)\cdot x^{-1} \cdot (x\circ z)$.  We will write $xy$ for $x\cdot y$ and $\overline x$ for the inverse to $x$ under $\circ$. We will refer to the operations as the dot and circle operations; some works call these the additive and multiplicative operations for historical reasons. We will denote the identity, common to both group structures, by $1_B$. 

Skew left braces were introduced by Guarnieri and Vendramin \cite{GuarnieriVendramin17}, generalizing the notion of {\it left brace} formulated by Rump \cite{Rump07} who required that $(B,\cdot)$ be abelian. For simplicity, we will use ``brace'' to mean ``skew left brace''. We will only consider braces with $B$ finite.

Two simple examples can be found using the group $G$ as the underlying set. We can let $\B=(G,\cdot,\circ)$ where both $(G,\cdot)$ and $(G,\circ)$ are the usual group operation on $G$ (i.e., $g\cdot h =g\circ h = gh$ for all $g,h\in G$): we call this the {\it trivial brace} on $G$. Alternatively, we can let $\B=(G,\cdot,\circ)$ with $(G,\cdot)$ the usual group operation and $g\circ h = hg$: we call this the {\it almost trivial brace} on $G$. 

Bachiller, in \cite{Bachiller16}, describes a connection between braces and regular, $G$-stable subgroups, hence to Hopf-Galois structures, as follows. Suppose $(N,\cdot)\le \Perm(G)$ is regular and $G$-stable. Let $\varkappa:N \to G$ be the map $\varkappa(\eta) = \eta[1_G]$: as $N$ is regular, $\varkappa$ is a bijection. Then $(N,\cdot,\circ)$ is a brace with 
$\eta\circ \pi = \varkappa^{-1}(\varkappa(\eta)\ast_G\varkappa(\pi)), \;\eta,\pi\in N$
where $\ast_G$ is the usual operation on $G$. The brace constructed not only incorporates $N$ (as $(N,\cdot)$) but also $G$ since $\varkappa: (N,\circ)\to G$ is an isomorphism.
It is easy to see that $\lambda(G)\le \Perm(G)$ gives the trivial brace on $G$, and $\rho(G)\le \Perm(G)$ gives the almost trivial brace on $G$.


If $\B=(B,\cdot,\circ)$ is a brace and $(B,\cdot)$ is a nonabelian group, the notion of an opposite brace was developed independently in \cite{Rump19} and \cite{KochTruman20}. The opposite brace is defined as $\B'=(B,\cdot',\circ)$ where $x\cdot' y = yx$. While the underlying groups of $\B$ and $\B'$ are isomorphic (i.e., $(B,\cdot)\cong (B,\cdot')$ and, of course, $(B,\circ)\cong(B,\circ)$), in general $\B\not\cong \B'$. Certainly, $\B=\B'$ if and only if $(B,\cdot)$ is abelian, however there exist examples of braces isomorphic to their opposite with $(B,\cdot)$ nonabelian: see \cite[Ex. 6.1]{KochTruman20}.
Evidently, the trivial brace and almost trivial brace are opposites.


Braces were developed to find certain set-theoretic solutions to the Yang-Baxter equation. A {\it set-theoretic solution to the Yang-Baxter equation} is a set $B$ together with a function $R:B\times B\to B\times B$ such that
	\[(R\times\id)(\id \times R)(R\times \id) = (\id\times R)(R\times \id)(\id\times R):B\times B\times B\to B\times B\times B.\]
	Writing $R(x,y)=(R_x(y),R_y(x))$, then $R$ is {\it non-degenerate} if both $R_x$ and $R_y$ are bijections. Also, if $R(R(x,y))=(x,y)$ then $R$ is {\it involutive}. 

%
For any brace $\B=(B,\cdot,\circ)$ we let
\[R_{\B}(x,y) = (x^{-1}(x\circ y),\overline{x^{-1}(x\circ y)}\circ x \circ y)\]
for $x,y\in B$. Then $R_{\B}$ is a non-degenerate solution to the Yang-Baxter equation. Furthermore, $R_{\B}$ is involutive if and only if $(B,\cdot)$ is abelian. 

If $(B,\cdot)$ is nonabelian, then the opposite brace gives an additional solution
\[R_{\B'}(x,y)=((x\circ y)x^{-1},\overline{(x\circ y)x^{-1}}\circ x \circ y), x,y\in B;\]
furthermore $R_{\B'}$ is the inverse to $R_{\B}$ (c.f, e.g., \cite[Th. 4.1]{KochTruman20}).

For example, applying these constructions to the trivial brace gives
\[R_{\B}(x,y)=(y,y^{-1}xy),\;R_{\B'}(x,y)=(x,x^{-1}yx),\;x,y\in B.\]

\section{Abelian Maps and Regular Subgroups}

We now show how to construct a regular, $G$-stable subgroup from an abelian map. Notice that we cannot simply drop the ``fixed point free'' condition and use Childs's construction. This is easy to see: if $\psi:G \to G$ is an abelian map and $\psi(g)=g,\;g\ne 1_G$ then $\lambda(g)\rho(\psi(g))[1_G]= \lambda(1_G)\rho(\psi(1_G))[1_G]$
and so $\{\lambda(g)\rho(\psi(g)):g\in G\}$ is not regular.

Our main result is as follows.

\begin{theorem}\label{main}
	Let $\psi:G\to G$ be an abelian endomorphism. For $g\in G$, define $\eta_g\in\Perm(G)$ by $\eta_g[h]=g\psi(g^{-1})h\psi(g),\;h\in G$,
	and let $N=N_{\psi}=\{\eta_g:g\in G\}$. Then $N$ is a regular, $G$-stable subgroup of $\Perm(G)$.
\end{theorem}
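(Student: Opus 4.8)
The plan is to verify the three required properties in turn, after first rewriting $\eta_g$ in a more convenient form. Since $\psi(g^{-1})=\psi(g)^{-1}$, one has
\[\eta_g=\lambda\!\left(g\psi(g)^{-1}\right)\rho\!\left(\psi(g)\right),\]
so each $\eta_g$ lies in $\lambda(G)\rho(G)\le\Perm(G)$; because $\lambda$ and $\rho$ commute and $\rho$ is an anti-homomorphism, this form makes both compositions in $N$ and conjugations of elements of $N$ by $\lambda(\gamma)$ easy to compute. Note also that $\eta_{1_G}=\id$ and $\eta_g[1_G]=g$, so $g\mapsto\eta_g$ is a bijection from $G$ to $N$ (with inverse $\eta\mapsto\eta[1_G]$ on $N$); in particular $|N|=|G|$.

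First I would show $N$ is a subgroup by showing it is closed under composition. Using the displayed form and the hypothesis that $\psi(G)$ is abelian (so $\psi(g)$ and $\psi(h)$ commute and $\psi(g)\psi(h)=\psi(gh)$), a short calculation gives $\eta_g\eta_h=\eta_m$ with $m=g\,\psi(g)^{-1}\,h\,\psi(g)$; the one thing to check is $\psi(m)=\psi(g)\psi(h)$, which is immediate since $\psi$ is a homomorphism with abelian image. As $N$ is a finite nonempty subset of $\Perm(G)$ containing $\id$ and closed under composition, it is a subgroup. (Equivalently, $g\mapsto\eta_g$ identifies $N$ with the group $(G,\ast)$, $g\ast h=g\,\psi(g)^{-1}\,h\,\psi(g)$.)

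Next, regularity: since $|N|=|G|$, it suffices to show that for each $a\in G$ the evaluation map $\eta\mapsto\eta[a]$ on $N$ is injective. Suppose $\eta_g[a]=\eta_{g'}[a]$, i.e. $g\psi(g)^{-1}a\psi(g)=g'\psi(g')^{-1}a\psi(g')$. Applying the homomorphism $\psi$ to both sides and using that $\psi(G)$ is abelian collapses the conjugating factors $\psi(\psi(g))^{\pm1}$, leaving $\psi(g)\psi(a)=\psi(g')\psi(a)$ and hence $\psi(g)=\psi(g')$; substituting this back into the original equation and cancelling gives $g=g'$. Thus $N$ is regular. This step is the crux of the argument: it is exactly the trick of applying $\psi$ that fails to rescue Childs's construction $\{\lambda(g)\rho(\psi(g))\}$ when $\psi$ is not fixed point free, and it is what the extra left multiplication by $\psi(g)^{-1}$ in the definition of $\eta_g$ is designed to make work.

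Finally, $G$-stability. Conjugating the displayed form by $\lambda(\gamma)$ leaves the $\rho$-factor unchanged (as $\lambda$ and $\rho$ commute) and conjugates the $\lambda$-factor, so ${}^{\gamma}\eta_g=\lambda\!\left(\gamma g\psi(g)^{-1}\gamma^{-1}\right)\rho\!\left(\psi(g)\right)$. Setting $g'=\gamma g\psi(g)^{-1}\gamma^{-1}\psi(g)$, I would check that $\psi(g')=\psi(g)$ — here I again use that $\psi$ is a homomorphism with abelian image, equivalently that $\psi$ is constant on conjugacy classes — so that $g'\psi(g')^{-1}=\gamma g\psi(g)^{-1}\gamma^{-1}$ and therefore ${}^{\gamma}\eta_g=\eta_{g'}\in N$. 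The computations in this step and the closure step are routine once one has guessed the correct representatives $m$ and $g'$, which are \emph{not} the naive $gh$ and $\gamma g\gamma^{-1}$; apart from that, the only genuinely substantive point is the $\psi$-argument for regularity noted above.
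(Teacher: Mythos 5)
Your proof is correct and takes essentially the same route as the paper: the same subgroup/regular/$G$-stable verifications with the same representatives $\eta_g\eta_h=\eta_{g\psi(g)^{-1}h\psi(g)}$ and ${}^{\gamma}\eta_g=\eta_{\gamma g\psi(g)^{-1}\gamma^{-1}\psi(g)}$, and the same crucial trick of applying $\psi$ to both sides for regularity (the paper phrases that step as showing nontrivial elements of $N$ have no fixed points rather than as injectivity of evaluation at each point, but the computation is identical). The factorization $\eta_g=\lambda\big(g\psi(g)^{-1}\big)\rho\big(\psi(g)\big)$ is just a convenient repackaging of the same calculations; the only caveat is that your $\rho$ (with $\rho(x)[h]=hx$, an anti-homomorphism) uses the opposite convention to the paper's $\rho(x)[h]=hx^{-1}$, which you flag and use consistently.
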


In section \ref{Guru} we will show how to obtain Childs's construction from ours.

\begin{proof}
	We will first show $N$ is in fact a subgroup of $\Perm(G)$. Note that, for $g,h,k\in G$, 
	\begin{align*}\eta_g\eta_{k^{-1}}[h]&=\eta_g[k^{-1}\psi(k)h\psi(k^{-1})]\\
	&=g\psi(g^{-1})k^{-1}\psi(k)h\psi(k^{-1})\psi(g)\\
	&=\big(g\psi(g^{-1})k^{-1}\psi(g)\big)\psi(g^{-1}) \psi(k) h\psi(k^{-1}g).\end{align*}
	Now since $\psi$ is abelian we have $\psi(g^{-1}k) =\psi\big(g\psi(g^{-1})k^{-1}\psi(g)\big)^{-1}$, hence
\[
	\eta_g\eta_{k^{-1}}[h]=\big(g\psi(g^{-1})k^{-1}\psi(g)\big)\psi\big(g\psi(g^{-1})k^{-1}\psi(g)\big)^{-1}h\big(g\psi(g^{-1})k^{-1}\psi(g)\big)
=\eta_{g\psi(g^{-1})k^{-1}\psi(g)}[h].\]
	As $\eta_g\eta_{k^{-1}}\in N$ we get that $N\le \Perm(G)$.
	
	We next show $N$ is regular. Suppose $\eta_g[h]=h$ for some $g,h\in G$. Then $g\psi(g^{-1})h\psi(g)=h$. Thus,
$\psi(g)\psi(h)=\psi(gh)=\psi(g\psi(g^{-1})h\psi(g))=\psi(h)$
	and so $g\in \ker \psi$. Therefore,
	$\eta_g[h] = g\psi(g^{-1})h\psi(g) = gh = h$
	and $g=1_G$. Furthermore, since $\eta_g[1_G]=g$ we see that the $\eta_g$ are all distinct, hence $|N|=|G|$. These properties suffice to show that $N\le \Perm(G)$ is regular.
	
	Finally, we show $N$ is $G$-stable. For $g,h,k\in G$ we have
	\begin{align*}
	^k\eta_g[h] &= \lambda(k)\eta_g\lambda(k^{-1})[h]\\
	&= \lambda(k)\eta_g[k^{-1}h]\\
	&=kg\psi(g^{-1})k^{-1}h\psi(g)\\
	&=\big(kg\psi(g^{-1})k^{-1}\psi(g)\big)\psi(g^{-1})h\psi(g),
	\end{align*}
	and since $\psi(kg\psi(g^{-1})k^{-1}\psi(g)) = \psi(g)$ we get
	$^k\eta_g[h] =\eta_{kg\psi(g^{-1})k^{-1}\psi(g)}[h]$
	and $N$ is $G$-stable.
\end{proof}

The opposite subgroup to $N$ is also easy to describe.

\begin{corollary}\label{main2}
	Let $\psi$ be as above. For $g\in G$, define $\eta'_g\in\Perm(G)$ by
	$\eta'_g[h]=h\psi(h^{-1})g\psi(h)$,
	and let $N'=N'_{\psi}=\{\eta'_g:g\in G\}$. Then $N'$ is regular, $G$-stable, and $N'=\mathrm{Cent}_{\Perm(G)}(N_{\psi})$. 
\end{corollary}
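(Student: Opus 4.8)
The plan is to reduce the whole statement to a single commutation identity and then quote the Greither--Pareigis lemma already recalled in the background. The first ingredient I would record is that, for all $h,k\in G$,
$\psi(\eta_k[h])=\psi(k)\psi(h)$. This is immediate from $\psi$ being a homomorphism with abelian image: writing $\eta_k[h]=k\,\psi(k)^{-1}h\,\psi(k)$ and applying $\psi$ produces the factor $\psi(k)\,\psi^2(k)^{-1}\,\psi(h)\,\psi^2(k)$, and the two $\psi^2$-terms cancel inside the abelian group $\psi(G)$. (Equivalently, $\psi$ is constant on conjugacy classes, so $\psi(\psi(k)^{-1}h\psi(k))=\psi(h)$.) This is the same mechanism used in the proof of Theorem \ref{main}.

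The technical heart is then to check directly that $\eta_k\,\eta'_g=\eta'_g\,\eta_k$ for all $g,k\in G$. Applying the left side to $h\in G$ and using $\eta'_g[h]=h\,\psi(h)^{-1}g\,\psi(h)$ gives $k\,\psi(k)^{-1}h\,\psi(h)^{-1}g\,\psi(h)\,\psi(k)$. For the right side I would set $u=\eta_k[h]$, use $\psi(u)=\psi(k)\psi(h)$ from the previous paragraph, and expand $\eta'_g[u]=u\,\psi(u)^{-1}g\,\psi(u)$; the block $\psi(k)\,\psi(h)^{-1}\,\psi(k)^{-1}$ collapses to $\psi(h)^{-1}$ because $\psi(G)$ is abelian, and one lands on exactly the same word $k\,\psi(k)^{-1}h\,\psi(h)^{-1}g\,\psi(h)\,\psi(k)$. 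Thus each $\eta'_g$ — a priori just a self-map of $G$ — commutes with every element of $N_\psi$. Since $N_\psi$ is regular, any self-map of $G$ commuting with it is automatically a bijection (it is the composite of the two regularity bijections $h\mapsto(\text{the }\eta\in N_\psi\text{ with }\eta[1_G]=h)$ and $\eta\mapsto\eta[\eta'_g[1_G]]$), so indeed $\eta'_g\in\Perm(G)$ and therefore $N'\subseteq\Cent_{\Perm(G)}(N_\psi)$.

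Finally I would close by counting. Because $\eta'_g[1_G]=g$, the $\eta'_g$ are pairwise distinct, so $|N'|=|G|$. By Theorem \ref{main}, $N_\psi$ is regular and $G$-stable, so by \cite[Lemma 2.4.2]{GreitherPareigis87} the subgroup $\Cent_{\Perm(G)}(N_\psi)$ is regular and $G$-stable, in particular of order $|G|$. Combined with $N'\subseteq\Cent_{\Perm(G)}(N_\psi)$ and $|N'|=|G|$, this forces $N'=\Cent_{\Perm(G)}(N_\psi)$, whence $N'$ is in particular a subgroup and is regular and $G$-stable. The only step demanding genuine work is the commutation identity in the second paragraph; the sole subtlety there is invoking abelianness of $\psi(G)$ at the two indicated spots, and since this is exactly the device behind Theorem \ref{main}, I do not expect any new obstacle.
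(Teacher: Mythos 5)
Your proposal is correct and follows exactly the route the paper itself indicates ("since $|N|=|N'|$ it suffices to show $N$ and $N'$ commute"), merely writing out the routine commutation computation in full; the identity $\psi(\eta_k[h])=\psi(k)\psi(h)$ and the counting via \cite[Lemma 2.4.2]{GreitherPareigis87} are exactly the right ingredients. Your extra care in noting that $\eta'_g$ is a priori only a self-map, and deducing bijectivity from the fact that it commutes with the regular subgroup $N_\psi$, is a nice touch the paper glosses over.
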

\begin{proof}
	This can be established in a manner similar to theorem \ref{main}; alternatively, since $|N|=|N'|$ it suffices to show $N$ and $N'$ commute. Either approach is routine.
\end{proof}

Recall that in the theory of fixed point free abelian endomorphisms it was possible to obtain the same regular, $G$-stable subgroup for two different choices of $\psi$. That remains the case here, but with a simpler criterion.

\begin{proposition}\label{equal}
	Let $\psi_1,\psi_2:G\to G$ be abelian. Then $N_{\psi_1}=N_{\psi_2}$ if and only if there exists a homomorphism $\zeta:G\to Z(G)$ such that $\psi_1(g)=\zeta(g)\psi_2(g)$.
\end{proposition}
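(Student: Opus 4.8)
The plan is to prove both directions by direct computation with the formula $\eta_g[h] = g\psi(g^{-1})h\psi(g)$, exploiting the fact that two elements of $N_{\psi}$ coincide precisely when they agree on $1_G$ (this is the regularity established in Theorem~\ref{main}: since $\eta_g[1_G] = g\psi(g^{-1})\psi(g) = g$, the assignment $g \mapsto \eta_g$ is injective). So the real content is to understand when $\eta^{(1)}_g = \eta^{(2)}_{g'}$ as permutations, where the superscripts indicate which map $\psi_i$ is used to build the element.

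First I would handle the ``if'' direction. Suppose $\zeta : G \to Z(G)$ is a homomorphism with $\psi_1(g) = \zeta(g)\psi_2(g)$ for all $g$. I want to show $\eta^{(1)}_g = \eta^{(2)}_g$ for each $g$, which suffices since then $N_{\psi_1} = \{\eta^{(1)}_g\} = \{\eta^{(2)}_g\} = N_{\psi_2}$. Compute, using that $\zeta$ takes central values and $\psi_2$ is abelian (hence constant on conjugacy classes), $\eta^{(1)}_g[h] = g\psi_1(g^{-1})h\psi_1(g) = g\zeta(g^{-1})\psi_2(g^{-1})\,h\,\zeta(g)\psi_2(g)$; the central factors $\zeta(g^{-1})$ and $\zeta(g)$ can be moved past everything and cancel, leaving $g\psi_2(g^{-1})h\psi_2(g) = \eta^{(2)}_g[h]$. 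This is routine.

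For the ``only if'' direction, assume $N_{\psi_1} = N_{\psi_2}$. Then for each $g\in G$ there is a unique $g'\in G$ with $\eta^{(1)}_g = \eta^{(2)}_{g'}$; evaluating at $1_G$ gives $g = \eta^{(1)}_g[1_G] = \eta^{(2)}_{g'}[1_G] = g'$, so in fact $\eta^{(1)}_g = \eta^{(2)}_g$ for every $g$. Now evaluate this equality at an arbitrary $h$: $g\psi_1(g^{-1})h\psi_1(g) = g\psi_2(g^{-1})h\psi_2(g)$ for all $h$. Cancelling the leading $g$ and rearranging, $\psi_1(g^{-1})h\psi_1(g) = \psi_2(g^{-1})h\psi_2(g)$ for all $h$, i.e. $\psi_2(g)\psi_1(g^{-1})$ commutes with every $h\in G$, so $\zeta(g) := \psi_1(g)\psi_2(g^{-1}) \in Z(G)$. (Equivalently $\psi_2(g)\psi_1(g^{-1})$ is central, whence its inverse $\psi_1(g)\psi_2(g^{-1})$ is too.) Since $\zeta(g)$ is central we may also write $\zeta(g) = \psi_1(g)\psi_2(g)^{-1} = \psi_2(g)^{-1}\psi_1(g)$, which gives $\psi_1(g) = \zeta(g)\psi_2(g)$ as required. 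It remains to check $\zeta$ is a homomorphism: for $g,k\in G$, using that the values of $\zeta$ are central and both $\psi_i$ are homomorphisms, $\zeta(gk) = \psi_1(gk)\psi_2(gk)^{-1} = \psi_1(g)\psi_1(k)\psi_2(k)^{-1}\psi_2(g)^{-1} = \psi_1(g)\zeta(k)\psi_2(g)^{-1} = \zeta(k)\psi_1(g)\psi_2(g)^{-1} = \zeta(k)\zeta(g) = \zeta(g)\zeta(k)$, where the last equality holds because $Z(G)$ is abelian.

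I do not expect a genuine obstacle here; the only point requiring a little care is the bookkeeping in the ``only if'' direction — making sure the reduction $\eta^{(1)}_g = \eta^{(2)}_{g'} \Rightarrow g=g'$ is invoked, and then correctly extracting that $\psi_1(g)\psi_2(g)^{-1}$ (rather than some conjugate) is the central element, using $\psi_i$ abelian at the right moment. Everything else is formal manipulation in $G$ with central elements sliding freely.
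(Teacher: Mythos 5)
Your proof is correct and follows essentially the same route as the paper's: use regularity (evaluation at $1_G$) to match $\eta^{(1)}_g$ with $\eta^{(2)}_g$, deduce that $\psi_1(g)\psi_2(g^{-1})$ is central, and verify it defines a homomorphism into $Z(G)$ by sliding central values past everything. The only cosmetic differences are that the paper labels the central element as $\psi_2(g)\psi_1(g^{-1})$ (the inverse of yours) and dismisses the converse as trivial, while you write it out; your parenthetical appeal to $\psi_2$ being constant on conjugacy classes in the ``if'' direction is unnecessary but harmless.
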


\begin{remark}
	This is a clearer notion of equivalence than found in \cite{Childs13}, where $\zeta$ interacts with the $\psi$'s in a more subtle way. The condition is exactly the same: what is different is the presentation of our regular subgroup. 
\end{remark}

\begin{proof}
	For $i=1,2$ write $N_{i}=N_{\psi_i}=\{\eta_{i,g}: g\in G\}$. Suppose that $N_{1}=N_{2}$. Since $\eta_{1,g}[1_G]=\eta_{2,g}[1_G]=g$ we see that $\eta_{1,g}=\eta_{2,g'}$ if and only if $g=g'$. Thus, for all $h\in G$ we have
	\[\eta_{1,g}[h] = g\psi_1(g^{-1})h\psi_1(g) = g\psi_2(g^{-1})h\psi_2(g)=\eta_{2,g}[h],\]
	from which it follows that
	\[\psi_2(g)\psi_1(g^{-1})h(\psi_2(g)\psi_1(g^{-1}))^{-1} = h\]
	so $\psi_2(g)\psi_1(g^{-1})\in Z(G)$. Let $\zeta(g)=\psi_2(g)\psi_1(g^{-1})$. Then $\psi_2(g)=\zeta(g)\psi_1(g)$, and since
	\begin{align*}
	\zeta(gh) &= \psi_2(gh)\psi_1((gh)^{-1})\\
	&= \psi_2(g)(\psi_2(h)\psi_1(h^{-1}))\psi_1(g^{-1})\\
	&=\psi_2(g)\zeta(h)\psi_1(g^{-1})\\
	&= \psi_2(g)\psi_1(g^{-1})\zeta(h)\tag{$\zeta(h)\in Z(G)$}\\
	&= \zeta(g)\zeta(h)
	\end{align*}
	we see that $\zeta:G\to Z(G)$ is the desired homomorphism. The converse--that having such a $\zeta$ shows $N_1=N_2$--is trivial.
\end{proof}

\begin{remark}
	As is evident in the above proof, one does not need to show that $\zeta$ is a homomorphism: $N_{\psi_1}=N_{\psi_2}$ if and only if $\psi_2(g)\psi_1(g^{-1})\in Z(G)$ for all $g\in G$.
\end{remark}

In \cite[Prop. 5.1]{KochTruman20b} we show that if $\varphi\in\Aut(G)$ and $\psi:G\to G$ is a fixed point free abelian endomorphism, then $\varphi^{-1}\psi\varphi$ is also a fixed point free abelian endomorphism. Here, we extend this result to abelian maps, and give a condition for when conjugating by $\varphi^{-1}$ fails to give a new regular subgroup.

\begin{proposition}
	If $\psi:G\to G$ is abelian, and $\varphi\in\Aut(G)$, then $\varphi^{-1}\psi\varphi$ is abelian. Furthermore, $N_{\psi}=N_{\varphi^{-1}\psi\varphi}$ if and only if $\psi(g\varphi(g^{-1}))\in Z(G)$ for all $g\in G$.
\end{proposition}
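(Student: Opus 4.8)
I would begin with the first assertion, which is immediate: since $\varphi$ is onto, $(\varphi^{-1}\psi\varphi)(G)=\varphi^{-1}\big(\psi(G)\big)$, and the image of the abelian group $\psi(G)$ under the isomorphism $\varphi^{-1}$ is again abelian; being a composite of homomorphisms, $\varphi^{-1}\psi\varphi$ is an endomorphism.

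For the equivalence, set $\psi':=\varphi^{-1}\psi\varphi$. The plan is to feed the pair $\psi,\psi'$ into Proposition \ref{equal} and then transport the resulting pointwise condition across $\varphi$. The one structural fact that makes this work is that $Z(G)$ is a characteristic subgroup of $G$, so $\varphi$ (hence $\varphi^{-1}$) restricts to an automorphism of $Z(G)$; in particular $x\in Z(G)\iff\varphi(x)\in Z(G)$ for every $x\in G$.

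Carrying this out: by the remark following Proposition \ref{equal}, $N_{\psi}=N_{\psi'}$ if and only if $\psi(g)\,\psi'(g)^{-1}\in Z(G)$ for all $g\in G$ (the condition in the remark is stated as $\psi'(g)\psi(g^{-1})\in Z(G)$, but $Z(G)$ is closed under inversion), that is, $\psi(g)\,\varphi^{-1}\!\big(\psi(\varphi(g))\big)^{-1}\in Z(G)$ for all $g$. Writing $\psi(g)=\varphi^{-1}\big(\varphi(\psi(g))\big)$ and pulling $\varphi^{-1}$ to the front, this element is $\varphi^{-1}\!\big(\varphi(\psi(g))\,\psi(\varphi(g))^{-1}\big)$, so by the characteristic-subgroup remark the condition becomes
\[\varphi(\psi(g))\,\psi(\varphi(g))^{-1}\in Z(G)\qquad\text{for all }g\in G.\]
The final step I would take is to recognise this as the condition in the statement: $\psi$ is a homomorphism, so $\psi\big(g\varphi(g^{-1})\big)=\psi(g)\,\psi(\varphi(g))^{-1}$, and the fact that an abelian map is constant on conjugacy classes is what permits passing between the leading factor $\varphi(\psi(g))$ and $\psi(g)$ modulo $Z(G)$. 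The converse is immediate: if $\zeta(g):=\psi(g)\,\psi'(g)^{-1}$ takes values in $Z(G)$ then it is automatically the homomorphism $G\to Z(G)$ demanded by Proposition \ref{equal}, and $N_{\psi}=N_{\psi'}$ follows at once.

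The hard part will be exactly that last identification: one must keep careful track of the order in which $\varphi$ and $\psi$ are composed, and it is the interplay of characteristicity of $Z(G)$ (which licenses moving $\varphi$ through the central condition) with $\psi$ being class-constant that converts $\varphi(\psi(g))\,\psi(\varphi(g))^{-1}$ into $\psi\big(g\varphi(g^{-1})\big)$. Everything else is formal manipulation.
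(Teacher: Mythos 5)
Your handling of the first assertion is fine, and your reduction of the second is correct and careful as far as it goes: combining the remark after Proposition \ref{equal} with the fact that $Z(G)$ is characteristic, the condition $N_{\psi}=N_{\varphi^{-1}\psi\varphi}$ is equivalent to $\varphi(\psi(g))\,\psi(\varphi(g))^{-1}\in Z(G)$ for all $g\in G$. The genuine gap is exactly the step you flag as ``the hard part'': passing from the leading factor $\varphi(\psi(g))$ to $\psi(g)$ modulo $Z(G)$. Class-constancy of $\psi$ relates $\psi(x)$ to $\psi(yxy^{-1})$; it gives no comparison between $\varphi(\psi(g))$ and $\psi(g)$, and nothing about an abelian map forces $\varphi(\psi(g))\equiv\psi(g)\pmod{Z(G)}$. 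Moreover this step cannot be repaired, because the two conditions are not equivalent. If $\varphi$ is inner, say conjugation by $c$, then $g\varphi(g^{-1})=[g,c]$ is a commutator, hence lies in $[G,G]\subseteq\ker\psi$ (as $\psi$ is abelian), so the condition $\psi(g\varphi(g^{-1}))\in Z(G)$ holds vacuously; but $N_{\psi}=N_{\varphi^{-1}\psi\varphi}$ need not. Concretely, take $G=S_3$, let $\psi$ be trivial on $A_3$ and send odd permutations to $(12)$, and let $\varphi$ be conjugation by $(13)$. Then $\varphi^{-1}\psi\varphi$ sends odd permutations to $(23)$, and since $Z(S_3)$ is trivial, Proposition \ref{equal} gives $N_{\psi}\neq N_{\varphi^{-1}\psi\varphi}$, even though $\psi(g\varphi(g^{-1}))=1_G\in Z(G)$ for every $g$. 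Your intermediate condition correctly detects this: $\varphi(\psi(g))\psi(\varphi(g))^{-1}=(23)(12)\notin Z(S_3)$ for $g$ odd.

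For comparison, the paper's own proof founders at the analogous point: it asserts $\bigl(\varphi^{-1}\psi\varphi(g)\bigr)\psi(g^{-1})=\varphi^{-1}\psi\bigl(\varphi(g)g^{-1}\bigr)$, which would require $\varphi^{-1}$ to fix $\psi(g^{-1})$, and this is false in general. So the criterion you actually derived,
\[
\varphi(\psi(g))\,\psi(\varphi(g))^{-1}\in Z(G)\quad\text{for all }g\in G,
\]
is the correct one, and the statement's condition $\psi(g\varphi(g^{-1}))\in Z(G)$ is strictly weaker; the proposition as stated (and as proved in the paper) needs to be corrected rather than your argument completed.
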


\begin{proof}
	Let $\psi_{\varphi} = \varphi^{-1}\psi\varphi$. That $\psi_{\varphi}$ is an abelian map is easy to show, and mimics the proof in \cite[Prop. 5.1]{KochTruman20b}.
	For the second statement, by proposition \ref{equal} we know $N_{\psi}=N_{\psi_{\varphi}}$ if and only if $\psi_{\varphi}(g)\psi(g^{-1})\in Z(G)$ for all $g\in G$. We have
	\[
	\psi_{\varphi}(g)\psi(g^{-1}) = (\varphi^{-1}\psi\varphi(g))(\psi(g^{-1}))
	=\varphi^{-1}\psi(\varphi(g)g^{-1}),
\]
which is in $Z(G)$ if and only if $\psi\big( \varphi(g)g^{-1}\big)\in Z(G)$, which is true if and only if its inverse, $\psi\big( g\varphi(g^{-1})\big)$, is in $Z(G)$.
	\end{proof}

%

Let us consider some examples. These are generalizations of examples presented in \cite{KochStordyTruman20}.

\begin{example}\label{Sn}
	Let $n\ge 5$, and suppose $\psi:S_n\to S_n$ is abelian. Since $\ker \psi \triangleleft S_n$ we must have $\ker \psi = \{1_{S_n}\}, A_n$, or $S_n$. As $\psi$ is abelian we know that $\ker \psi \ne \{1_{S_n}\}$, and $\ker \psi = S_n$ if and only if $\psi$ is the trivial map. We shall assume $\ker \psi = A_n$.
	
	Since $S_n$ is generated by transpositions it suffices to describe $\psi(\tau)$ for all transpositions $\tau$. Furthermore, since $\psi$ is abelian and all transpositions are conjugate, $\psi(\tau_1)=\psi(\tau_2)$ for all transpositions $\tau_1,\tau_2\in S_n$. Since $\tau^2\in A_n$ we know $\psi(\tau)$ has order $2$. So let $\xi\in S_n$ have order $2$, and define
	\[\psi(\sigma):=\psi_{\xi}(\sigma) = \begin{cases}
	1_G & \sigma\in A_n \\
	\xi & \sigma\not\in A_n
	\end{cases}.\]
	This is clearly an endomorphism, and since $\psi(S_n)=\gen{\xi}\cong C_2$ it is abelian. We can see that $\psi$ is fixed point free if and only if $\xi\in A_n$. The corresponding regular subgroup is $N=\{\eta_{\sigma}:\sigma\in S_n\}$ with
	\[\eta_{\sigma}[\pi] = \begin{cases}
	\sigma\pi & \sigma \in A_n\\
	\sigma \xi \pi \xi & \sigma\notin A_n
	\end{cases}. \]
	Since $Z(S_n)$ is trivial, each choice of $\xi$ produces a different regular, $G$-stable subgroup. Note that if we extend the choices of $\xi$ to include $\xi=1_G$ we also have the trivial map in this classification.

	We can also compute the elements of the opposite subgroup: $N'=\{\eta_{\sigma}':\sigma\in G\}$ with 
	\[\eta'_{\sigma}[\pi] = \begin{cases}
\pi\sigma & \pi \in A_n\\
\pi \xi \sigma \xi & \pi\notin A_n
\end{cases}. \]

We will see later that the subgroups above capture all of the regular, $S_n$-stable subgroups of $S_n$ in the case $n=5$.
	
\end{example}

\begin{example}\label{meta}
	Let $p>q$ be primes, $p\equiv 1\pmod q$, and let $M_{p,q}$ denote the nonabelian metacyclic group of order $pq$, namely
	\[M_{p,q}=\gen{s,t:s^p=t^q=1_G,\;tst^{-1}=s^d}\]
		where $d$ is an integer whose (multiplicative) order is $q$ mod $p$. If $\psi:M_{p,q}\to M_{p,q}$ is a nontrivial abelian endomorphism then $\ker \psi=\gen{s}$ since the Sylow $p$-subgroup is normal in $M_{p,q}$ and the Sylow $q$-subgroup is not. Thus $\psi(s)=1_{M_{p,q}}$. Write
		\[\psi(t)=s^it^j,\;0\le i \le p-1,\;1\le j\le q-1.\]
		Then $\psi_{i,j}:=\psi$ is an endomorphism with cyclic image, hence abelian. As observed in \cite[6.6]{KochStordyTruman20}, $\psi$ is fixed point free if and only if $j\ne 1$.
		We will see later that all regular, ${M_{p,q}}$-stable subgroups of $M_{p,q}$ come from abelian maps, either directly or through the opposite construction.
\end{example}

\section{Fixed Point Free Abelian Endomorphisms and Beyond}\label{Guru}

Here, we will show how our work includes all of the constructions in \cite{Childs13}, where the abelian maps are all fixed point free. We will also provide a class of examples which suggests that our construction encompasses significantly more Hopf-Galois structures. Recall that an abelian map $\Psi:G\to G$ is fixed point free if $\Psi(g)=g$ implies $g=1_G$. (Note the slight change in notation, reserving $\psi$ for our abelian maps.) 

Recall that a fixed point free abelian map $\Psi:G\to G$ gives rise to a regular, $G$-stable subgroup
$N=\{\lambda(g)\rho(\Psi(g)): g\in G\}$
which is isomorphic to $G$. As a consequence,
\[N[1_G]=\{g\Psi(g^{-1}):g\in G\} = G, \]
so every $k\in G$ can be represented uniquely as $k=g\psi(g^{-1})$ for some $g\in G$.
Define $\psi:G\to G$ by
$\psi(g\Psi(g^{-1}))=\Psi(g^{-1})$.
We first claim that $\psi$ is a fixed point free abelian endomorphism. (Indeed, it is the quasi-inverse of $\Psi$ as described in \cite{Childs13} and \cite{Caranti13}.) Note that

\[
\big(g\Psi(g^{-1})h\Psi(g)\big)\Psi\big(g\Psi(g^{-1})h\Psi(g)\big)^{-1}=\big(g\Psi(g^{-1})h\Psi(g)\big)\Psi(h^{-1}g^{-1})
=g\Psi(g^{-1})h\Psi(h^{-1}),
\]
so
\begin{align*}
	\psi(g\Psi(g^{-1})h\Psi(h^{-1}))&=\psi\Big(\big(g\Psi(g^{-1})h\Psi(g)\big)\Psi\big(g\Psi(g^{-1})h\Psi(g)\big)^{-1}\Big)\\
	&=\Psi\big(g\Psi(g^{-1})h\Psi(g)\big)^{-1}\\
	&=\Psi(g^{-1}h^{-1})\\
	&=\Psi(g^{-1})\Psi(h^{-1})\\
	&=\psi(g\Psi(g^{-1}))\psi(h\Psi(h^{-1}))
\end{align*}
and $\psi: G\to G$ is a homomorphism. 

Next, if $\psi(g\Psi(g^{-1}))=g\Psi(g^{-1})$ then $g=1_G$ by the definition of $\psi$, hence $\psi$ is fixed point free. Additionally, $\psi(G)=\Psi(G)$ so $\psi$ is abelian. 

Next we claim that, for all $g\in G$, $\eta_{g\Psi(g^{-1})} = \lambda(g)\rho(\Psi(g))$. Indeed, we have
\begin{align*}
	\eta_{g\Psi(g^{-1})}[h]&=(g\Psi(g^{-1}))\psi((g\Psi(g^{-1}))^{-1})h\psi(g\Psi(g^{-1})) \\
	&=g\Psi(g^{-1})\Psi(g)h\Psi(g^{-1})\\
	&=\lambda(g)\rho(\Psi(g))[h].
\end{align*}
Thus, the construction presented in this work includes all of the structures found in \cite{Childs13}.

Conversely, if $\psi:G\to G$ is fixed point free abelian, giving the regular, $G$-stable subgroup $N=\{\eta_g\}$ as above, defining $\Psi:G \to G$ by 
$\Psi(g\psi(g^{-1}))=\psi(g^{-1})$
gives a fixed point free abelian map, and
\[
\lambda(g\psi(g^{-1}))\rho(\Psi(g\psi(g^{-1})))[h] = g\psi(g^{-1})h\Psi\big(\psi(g)g^{-1}\big)\\
=g\psi(g^{-1})h\psi(g)=\eta_g[h].
\]
Thus, there is a one-to-one correspondence between the fixed point free constructions presented here and the constructions in \cite{Childs13}.

The following ``normal complement'' example gives a completely different family of Hopf-Galois structures obtained through abelian maps.

\begin{example}\label{nc}
	Let $G',G''\le G$ with $G'\triangleleft G$, $G''$ abelian, $G'\cap G'' = \{1_G\}$, and $|G'||G''|=|G|$. Define $\psi:G\to G$ by $\psi(hk)=k,\;h\in G',\;k\in G''$. As $G'$ is normal in $G$, $\psi$ is a homomorphism, evidently abelian. The corresponding regular, $G$-stable subgroup of $\Perm(G)$ can be made quite explicit: $N=\{\eta_{hk}:h\in G',k\in G''\}$ with $\eta_{hk}=\lambda(h)\rho(k)$. Thus, $N\cong G'\times G''$.
\end{example}

One obtains from this a proof of \cite[Cor. 6]{CrespoRioVela16} (see also \cite[Cor. 7.2]{Childs19}) in the case where the (potentially) non-normal group (here, $G''$) is abelian.

Note that if $G'$ is nonabelian then we have the opposite subgroup $N'=\{\eta'_{hk}:h\in G',k\in G''\}$ with
$\eta'_{hk}[xy] = xy\psi((xy)^{-1})hk\psi(xy) = xhky^{-1},\;x\in G',\;y\in G''$ as can be readily computed.

Returning to example \ref{Sn}, where an abelian map $\psi:S_n\to S_n,\;n\ge 5$ depends on the choice of a $\xi\in S_n$ with $\xi^2=1_{S_n}$, we observed that $\psi$ was fixed point free if and only if $\xi\in A_n$. Now, observe that if $\xi\notin A_n$ then $S_n=A_n\gen{\xi}$ and the above applies, producing a Hopf-Galois structure of type $A_n\times C_2$. In \cite[Th. 5, Th. 9]{CarnahanChilds99} there is a complete description of regular, $G$-stable subgroups of $\Perm(S_n),\;n\ge 5$ of type $S_n$ as well as of type $A_n\times C_2$. The classification given here, together with the opposite groups, account for all such subgroups. Furthermore, \cite[Prop. 5]{CrespoRioVela18} states that the only regular, $S_5$-stable subgroups of $\Perm(S_5)$ are of type $S_5$ or $A_5\times C_2$. Thus abelian maps give us all desired subgroups when $n=5$.

Similarly, in example \ref{meta} an abelian map $\psi:M_{p,q}\to M_{p,q}$ depends on integers $0\le i\le p-1,\;1\le j \le q-1$; furthermore $\psi$ is fixed point free if and only if $j\ne 1$. The case $j=1$ can be interpreted using example \ref{nc} by writing $M_{p,q}=\gen{s}\gen{s^it}$, giving a Hopf-Galois structure of type $C_p\times C_q$. The Hopf-Galois structures on a metacyclic extension are fully described in \cite{Byott04}. They are all of type $M_{p,q}$ or $C_p\times C_q$. Using the characterization found in \cite[\S 8]{KochTruman20b} it is clear that we have found all such structures here (once opposites are considered in the type $M_{p,q}$ case).

\section{Abelian Maps and Braces}

In \cite[Prop. 4.4]{KochStordyTruman20} the brace corresponding to a fixed point free abelian endomorphism is found. Here we will duplicate this result while allowing our abelian map to have fixed points. 

In fact, our description of the regular, $G$-stable subgroup $N:=N_{\psi}$ (for $\psi:G\to G$ abelian) allows for a simpler proof than the one given in \cite{KochStordyTruman20}. The primary reason for this is that the map $\varkappa: N\to G$ is particularly nice using our construction:
$\varkappa(\eta_g) = \eta_{g}[1_G] = g$.
Then, since
$\eta_g\eta_h[1_G] = \eta_g[h] = g\psi(g^{-1})h\psi(g)$
we see that $\eta_g\eta_h = \eta_{g\psi(g^{-1})h\psi(h)}$ and $\eta_g^{-1} = \eta_{\psi(g)g^{-1}\psi(g^{-1})}$.
 If we define
$
	\eta_g\circ\eta_h = \varkappa^{-1}(\varkappa(g)\varkappa(h))
	=\varkappa^{-1}(gh)
	=\eta_{gh}
$
then $(N,\cdot,\circ)$ is a brace. 

It would seem desirable to think of our underlying set as $G$ instead of $N$. If we identify, through $\varkappa$, the elements of $N$ with the elements of $G$, then the dot operation becomes 
$g\cdot h = \varkappa(\eta_g\eta_h) = \varkappa(\eta_{g\psi(g^{-1})h\psi(h)}) = g\psi(g^{-1})h\psi(g)$
and the circle operation becomes
$g\circ h = \varkappa(\eta_g\circ \eta_h) = \varkappa(\eta_{gh}) = gh$.
This creates an inconvenient issue with notation, as we can no longer suppress the dot expressions without creating confusion. However, this can be remedied: we claim that $(G,\circ,\cdot)$ is also a brace, which we can prove by showing the brace relation holds on $(G,\circ,\cdot)$. Swapping our operations this way, for $g,h,k\in G$ we need to show 
$g\cdot(h\circ k) = (g\cdot h)\circ \overline{g} \circ (g\cdot k)$.
We have
\begin{align*}
(g\cdot h)\circ \overline{g} \circ (g\cdot k) &= (g\psi(g^{-1})h\psi(g))\circ \overline g \circ (g\psi(g^{-1})k\psi(g))\\
&=(g\psi(g^{-1})h\psi(g))g^{-1}(g\psi(g^{-1})k\psi(g))\\
&=g\psi(g^{-1})hk\psi(g)
=g\circ (hk).
\end{align*}

The brace constructed above is what Childs, in \cite{Childs19} calls a {\it bi-skew brace}. In short, a bi-skew brace is any (skew left) brace where interchanging the operations results in another (skew left) brace. Noticing that our brace is bi-skew allows us to choose which is the dot operation. For reasons mentioned above, it seems reasonable to exchange the operations, giving the following.

\begin{proposition}
	Let $\psi$ be an abelian map on $(G,\cdot)$. Then $\B_{\psi}=(G,\cdot,\circ)$ is a bi-skew brace, with $g\cdot h = gh$ and
	$g\circ h = g\psi(g^{-1})h\psi(g)$. 
	Furthermore, $(G,\circ)\cong N_{\psi}$.
\end{proposition}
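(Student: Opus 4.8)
The plan is to verify the bi-skew brace axioms directly, using the computations already assembled in the surrounding discussion. Recall that a skew left brace requires $(G,\cdot)$ and $(G,\circ)$ to be groups satisfying the compatibility identity $x\circ(y\cdot z)=(x\circ y)\cdot \overline x\cdot(x\circ z)$, and that $\B_\psi$ is bi-skew means that $(G,\circ,\cdot)$ is also a skew left brace.

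First I would dispose of the group-structure claims. The operation $g\cdot h=gh$ is just the given group law on $G$, so $(G,\cdot)$ is a group with identity $1_G$. For $(G,\circ)$ with $g\circ h=g\psi(g^{-1})h\psi(g)$, I would invoke Theorem~\ref{main}: the map $\varkappa:N_\psi\to G$, $\eta_g\mapsto g$, is a bijection transporting the subgroup law of $N_\psi$ to $\circ$ on $G$ (this is precisely the computation $\eta_g\eta_h=\eta_{g\psi(g^{-1})h\psi(g)}$ recorded just before the proposition, noting $\psi(h)=\psi(g)$ would be wrong — rather one uses $\psi(g^{-1})h\psi(g)\equiv h$ modulo $\ker$ so $\psi$ of it is $\psi(h)$; but the clean statement is that $\varkappa$ is a group isomorphism $(N_\psi,\cdot_{\mathrm{Perm}})\to(G,\circ)$). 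Hence $(G,\circ)$ is a group, and in fact $(G,\circ)\cong N_\psi$, which also settles the final sentence of the proposition.

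Next I would check the two brace compatibility identities. For the ``dot/circle'' brace $(G,\cdot,\circ)$: this is exactly the general fact from Bachiller's construction recalled in the Background section — whenever $(N,\cdot)\le\Perm(G)$ is regular and $G$-stable, $(N,\cdot,\circ)$ with $\eta\circ\pi=\varkappa^{-1}(\varkappa(\eta)\ast_G\varkappa(\pi))$ is a skew left brace. Transporting along $\varkappa$ gives that $(G,\cdot,\circ)$ is a skew left brace, so nothing new needs to be proved here. For the ``circle/dot'' brace $(G,\circ,\cdot)$ I would reproduce the short display already in the text: compute
\[
(g\cdot h)\circ\overline g\circ(g\cdot k)=\big(g\psi(g^{-1})h\psi(g)\big)g^{-1}\big(g\psi(g^{-1})k\psi(g)\big)=g\psi(g^{-1})hk\psi(g)=g\circ(h\cdot k),
\]
where the middle step uses that the $g\psi(g^{-1})$ and $\psi(g)$ factors telescope and $\overline g$ under $\circ$ acts, after transport, like $g^{-1}$ sandwiched appropriately; a one-line justification that $\overline g = \psi(g)g^{-1}\psi(g^{-1})$ (the inverse of $\eta_g$ in $N_\psi$, already noted) makes the cancellation rigorous. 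This establishes the compatibility identity for $(G,\circ,\cdot)$, completing the proof that $\B_\psi$ is bi-skew.

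I do not anticipate a genuine obstacle: every ingredient is already present in the excerpt. The one place to be careful is bookkeeping around the $\circ$-inverse $\overline g$ — one must not conflate it with $g^{-1}$ except inside the specific conjugation pattern where the $\psi$-terms cancel — so the main (minor) task is to state clearly that $\overline g=\psi(g)\,g^{-1}\,\psi(g^{-1})$ and then let the telescoping do the work. I would also remark that the bijection $\varkappa$ doing all the transporting is the identity on underlying sets once we identify $\eta_g$ with $g$, so no compatibility-of-identifications issue arises; the common identity of both group structures is $1_G$, as required.
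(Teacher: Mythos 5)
Your proof is correct and follows essentially the same route as the paper: one of the two brace structures comes for free from Bachiller's construction transported along $\varkappa$ (which also yields $(G,\circ)\cong N_{\psi}$), and the other is exactly the paper's telescoping verification of the compatibility identity. The only caveat is notational: you work throughout in the pre-swap convention (dot $=$ twisted, circle $=$ ordinary), so relative to the proposition's statement your ``Bachiller'' half is really $(G,\circ,\cdot)$ and your computed half is $\B_{\psi}=(G,\cdot,\circ)$ itself, the right-hand end of your display should read $g\cdot(h\circ k)$ in that convention, and the $\circ$-inverse appearing there is just $g^{-1}$ (the formula $\overline g=\psi(g)g^{-1}\psi(g^{-1})$ is the inverse for the twisted operation, which is not the one being inverted in that line).
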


Of course, this can also be verified by simply checking $(G,\circ)$ is a group and the brace relation holds. The brace constructed is identical to the brace in \cite{KochStordyTruman20} in the fixed point free case.


The observation that $\B_{\psi}$ is bi-skew allows us to construct another Hopf-Galois structure on a Galois extension with, potentially, a different Galois group. In order to minimize confusion below we will adopt very explicit notation for all of our binary operations.

\begin{theorem}
	Let $\psi:(G,\cdot)\to (G,\cdot)$ be an abelian endomorphism, and define $(G,\circ)$ as above. Suppose $(N,\ast_N)$ is an abstract group which is isomorphic to $(G,\circ)$. Then $\psi$ gives rise to a regular, $N$-stable subgroup $P\le\Perm(N)$ with $P\cong G$.
\end{theorem}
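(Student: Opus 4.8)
The plan is to exploit the fact that $\B_\psi = (G,\cdot,\circ)$ is a bi-skew brace, so that swapping the roles of the two operations produces another brace $(G,\circ,\cdot)$ whose "dot" group is now $(G,\circ) \cong (N,\ast_N)$. Applying the Bachiller correspondence (as recalled in the Braces subsection) in reverse to this swapped brace should yield a regular, $N$-stable subgroup of $\Perm(N)$ whose "circle" group — and hence the group it is isomorphic to — is $(G,\cdot) = G$. Concretely, I would transport everything along a fixed isomorphism $f:(G,\circ)\xrightarrow{\sim}(N,\ast_N)$, so that all formulas take place in $\Perm(N)$.

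First I would define, for each $a\in N$, a permutation $\pi_a \in \Perm(N)$ mimicking the formula for $\eta_g$ but with the roles of $\cdot$ and $\circ$ interchanged and then pushed through $f$: writing $g = f^{-1}(a)$ and $b = f^{-1}(x)$ for $x \in N$, set
\[
\pi_a[x] = f\big( g \circ \psi^\circ(g^{\bar{}}) \circ b \circ \psi^\circ(g) \big),
\]
where $\psi^\circ$ and the circle-inverse $g^{\bar{}}$ refer to the group $(G,\circ)$, and where $\psi^\circ$ denotes the abelian map on $(G,\circ)$ that plays the role of $\psi$ in the swapped brace. (One has to first identify this $\psi^\circ$: in the bi-skew brace $(G,\circ,\cdot)$ the "new $\psi$" is determined by the condition $g\cdot h = g \ast \psi^\circ(g^{-\circ}) \ast h \ast \psi^\circ(g)$ with $\ast = \circ$; unwinding $g\cdot h = gh$ against $g\circ h = g\psi(g^{-1})h\psi(g)$ shows the relevant map is again built from $\psi$, essentially its quasi-inverse as in Section~\ref{Guru}.) Set $P = \{\pi_a : a \in N\}$.

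Then I would verify the three required properties, each of which reduces — via the isomorphism $f$ — to a statement already proved about $N_\psi$ or about the bi-skew brace. That $P$ is a subgroup of $\Perm(N)$ and is regular follows because $(G,\circ,\cdot)$ is a genuine brace with nonabelian... more precisely, because the construction of Theorem~\ref{main} applied to the abelian map $\psi^\circ$ on the group $(G,\circ)$ produces a regular subgroup, and $f$ is a group isomorphism $(G,\circ)\to (N,\ast_N)$, so it conjugates $\Perm(G,\circ)$ to $\Perm(N)$ and carries $N_{\psi^\circ}$ to $P$. $N$-stability of $P$ is the image under this conjugation of the $(G,\circ)$-stability of $N_{\psi^\circ}$, using that $f$ intertwines the left regular representation of $(G,\circ)$ with that of $(N,\ast_N)$. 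Finally, $P \cong G$ because, by the Proposition, $N_{\psi^\circ} \cong (G,\circ\text{-circle operation of the swapped brace}) = (G,\cdot) = G$; transporting along $f$ preserves this isomorphism type.

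The main obstacle I anticipate is purely bookkeeping: correctly identifying the abelian map $\psi^\circ$ on $(G,\circ)$ that makes $(G,\circ,\cdot)$ match the shape "$(G,\cdot,\circ)$ coming from an abelian map" of the Proposition, and keeping straight which inverses, which $\psi$, and which group structure appear in each slot once everything is pushed through $f$. No single step is deep — the bi-skew property does all the real work — but the notational discipline (distinguishing $\cdot$-inverses from $\circ$-inverses, and $\psi$ from $\psi^\circ$) is where an error would creep in. Once the dictionary is fixed, each of the three verifications is a one-line transport of an already-established fact.
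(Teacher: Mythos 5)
Your overall instinct is sound---the bi-skew property is what makes the theorem work, and the construction is the reverse of the Bachiller correspondence applied to the brace $(G,\cdot,\circ)$---but the specific reduction you propose has a genuine gap: the abelian map $\psi^{\circ}$ on $(G,\circ)$ that your formula requires does not exist in general. You need $\psi^{\circ}$ to satisfy $g\cdot h = g\circ\psi^{\circ}(\overline{g})\circ h\circ\psi^{\circ}(g)$ for all $g,h$, so that $P$ is literally the subgroup $N_{\psi^{\circ}}$ produced by Theorem \ref{main} applied to the group $(G,\circ)$. This is not bookkeeping but an obstruction. Take $G=S_3$ with $\psi=\psi_{\xi}$ for $\xi$ a transposition (the normal-complement map of Example \ref{nc} with $G'=A_3$, $G''=\gen{\xi}$): then $(G,\circ)\cong C_3\times C_2$ is abelian. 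For an abelian group $\Gamma$ and \emph{any} endomorphism $\mu$, the formula of Theorem \ref{main} collapses, since $\eta_g[h]=g\mu(g^{-1})h\mu(g)=gh$; hence every $N_{\mu}$ equals $\lambda(\Gamma)$ and is abelian. But the theorem asserts $P\cong S_3$, which is nonabelian, so $P$ cannot be of the form $N_{\psi^{\circ}}$ for any abelian map $\psi^{\circ}$ on $(G,\circ)$. In general the regular stable subgroups arising from abelian maps form a proper subclass, and $P$ typically falls outside it.

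The fix---and what the paper actually does---is simpler than what you attempt: define $\pi_g[n]=\alpha(g\cdot\alpha^{-1}(n))$, i.e.\ the left translations of the \emph{dot} group $(G,\cdot)$ transported to $N$ along an isomorphism $\alpha:(G,\circ)\to(N,\ast_N)$, with no auxiliary map $\psi^{\circ}$ in sight. Then $\pi_g\pi_h=\pi_{g\cdot h}$ gives $P\cong G$ at once, regularity is immediate, and $N$-stability is verified by direct computation; it is at this last step that the bi-skew property (equivalently, the brace axiom for $(G,\cdot,\circ)$) does the work you correctly anticipated. So your first paragraph survives, but the second---identifying a ``new abelian map'' and invoking Theorem \ref{main} for $(G,\circ)$---is the step that fails and must be replaced by the direct transport of left $\cdot$-multiplication.
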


\begin{proof}
	Let $\alpha: (G,\circ)\to (N,\ast_N)$ be an isomorphism. For $g\in G$, define $\pi_g: N \to N$ by
$\pi_g[n] = \alpha(g\cdot\alpha^{-1}(n))$,
and let $P=\{\pi_g : g\in G\}$. Then
\begin{align*}\pi_g \pi_h[n] &= \pi_g[\alpha(h\cdot \alpha^{-1}(n))]\\
&= \alpha(g\cdot \alpha^{-1}(\alpha(h)\ast_N n))\tag{$\alpha^{-1}:N\to G$ is a homomorphism}\\
&=\alpha(g\cdot h\cdot \alpha^{-1}(n))
=\pi_{g\cdot h}[n],
\end{align*}
so $P\le \Perm(N)$ and $P\cong G$. One can easily show $P$ is regular.

Furthermore, we have
\begin{align*}
^m\pi_g[n] &= \lambda(m)\pi_g\lambda(m^{-1})[n]\\
&= m\ast_N\alpha(g\cdot\alpha^{-1}(m^{-1}\ast_N n))\\
&= m\ast_N \alpha(\alpha^{-1}(\alpha(g))\cdot\alpha^{-1}(m^{-1}\ast_N n))\\
&=m\alpha(\alpha^{-1}(\alpha(g)\ast_N m^{-1}\ast_N n))\\
&= m\ast_N\alpha(g)\ast_Nm^{-1}\ast_Nn,
\end{align*}
whereas
\begin{align*}
\pi_{\alpha^{-1}(m)\cdot g\cdot \alpha^{-1}(m^{-1})}[n] &= \alpha(\alpha^{-1}(m)\cdot g\cdot\alpha^{-1}(m^{-1})\cdot\alpha^{-1}(n))\\
&=\alpha(\alpha^{-1}(m)\cdot \alpha^{-1}(\alpha(g))\cdot\alpha^{-1}(m^{-1})\cdot\alpha^{-1}(n))\\
&=m\ast_N\alpha(g)\ast_Nm^{-1}\ast_Nn,
\end{align*}
hence $^m\pi_g=\pi_{\alpha^{-1}(m)\cdot g\cdot \alpha^{-1}(m^{-1})}$ and $P$ is $N$-stable. 
\end{proof}


\begin{example}
Let $G=S_n$, $\xi\in S_n$ an odd permutation of order $2$, and suppose $n\ge 5$. Let $\psi=\psi_{\xi}$ as in example \ref{Sn}. Then we have seen that $N_{\psi}\cong A_n\times C_2$, hence there is be a Hopf-Galois structure on an $A_n\times C_2$ extension of type $S_n$. An isomorphism $\alpha:(G,\circ)\to A_n\times C_2$ corresponding to our choice of $\xi$ is given by
$\alpha(\sigma)=(\sigma,1_{C_2}),\;\alpha(\tau) = (\tau\xi, \xi),\;\sigma\in A_n,\;\tau\notin A_n,\;C_2=\gen{\xi}$.
Then
\[\pi_{\tau}[(\sigma,1_{C_2})] = \begin{cases}
(\tau\sigma,1_{C_2}) & \tau\in A_n\\
(\tau\sigma\xi,\xi) & \tau\notin A_n
\end{cases},\;
\pi_{\tau}[(\sigma,\xi)] = \begin{cases}
(\tau\sigma,1_{C_2}) & \tau\in A_n\\
(\tau\sigma,\xi) & \tau\notin A_n
\end{cases}. \]
\end{example}


Of course, the construction above depends on a choice of isomorphism $\alpha: (G,\circ)\to N$. If $\beta:(G,\circ)\to N$ is another isomorphism, then $\alpha\beta^{-1}\in\Aut(N)$ and the resulting regular subgroups differ by conjugation by $\alpha\beta^{-1}$. 

We have described how one can use a brace $(B,\cdot,\circ)$ to construct two set-theoretic solutions to the Yang-Baxter equation. When a brace is in fact bi-skew, we get (up to) four solutions, namely
\begin{align*}
R_{\B}(x,y) &= (x^{-1}(x\circ y),\overline{x^{-1}(x\circ y)}\circ x \circ y)\\
R_{\B'}(x,y) &= ((x\circ y)x^{-1},\overline{(x\circ y)x^{-1}}\circ x \circ y)\\
S_{\B}(x,y) &= (\overline x\circ (xy),(\overline x\circ (xy))^{-1}xy)\\
S_{\B'}(x,y) &= ( (xy)\circ \overline x,yx( (xy)\circ \overline x)^{-1}).\\
\end{align*}
Applying these formulas to our bi-skew brace gives the following.
\begin{corollary}
	Let $\psi:G \to G$ be an abelian endomorphism. Then each of the following is a set-theoretic solution to the Yang-Baxter equation:
	\begin{align*}
	R_{1,\psi}(g,h) &= \big(\psi(g^{-1})h\psi(g),\psi(hg^{-1})h^{-1}\psi(g)g\psi(g^{-1})h\psi(gh^{-1})\big)\\
	R_{2,\psi}(g,h) &= \big(g\psi(g^{-1})h\psi(g)g^{-1},\psi(h)g\psi(h^{-1})\big)\\
	R_{3,\psi}(g,h) &= \big(\psi(g)h\psi(g^{-1}),\psi(g)h^{-1}\psi(g^{-1})gh\big)\\
	R_{4,\psi}(g,h) &= \big( gh\psi(h^{-1})g^{-1}\psi(h),hg\psi(h^{-1})g\psi(h)h^{-1}g^{-1}\big),\;g,h\in G.\\
	\end{align*}
	Furthermore:
	\begin{enumerate}
		\item $R_{1,\psi}=R_{2,\psi}$ if and only if $G$ is an abelian group (in which case $R_{1,\psi}(g,h)=R_{2,\psi} = (h,g)$).
		\item $R_{3.\psi}=R_{4,\psi}$ if and only if $g\psi(g^{-1})h\psi(g) = h\psi(h^{-1})g\psi(h)$ for all $g,h\in G$.
		\item $R_{1,\psi}R_{2,\psi}=R_{3,\psi}R_{4,\psi}=\id$. 
	\end{enumerate} 
\end{corollary}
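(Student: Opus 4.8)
The plan is to derive all four solutions mechanically from the general brace formulas for $R_{\B}, R_{\B'}, S_{\B}, S_{\B'}$ already displayed above, substituting the explicit operations of the bi-skew brace $\B_\psi = (G,\cdot,\circ)$ from the preceding proposition, where $g\cdot h = gh$ and $g\circ h = g\psi(g^{-1})h\psi(g)$. First I would record the two auxiliary identities I will need repeatedly: the $\circ$-inverse $\overline{x}$, which from $\eta_g^{-1} = \eta_{\psi(g)g^{-1}\psi(g^{-1})}$ (computed in Section 5) gives $\overline{x} = \psi(x)x^{-1}\psi(x^{-1})$, and the fact that $\psi$ is a homomorphism with abelian image so $\psi(x)$ and $\psi(y)$ commute and $\psi(xy)=\psi(x)\psi(y)$. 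With these in hand, computing $x^{-1}(x\circ y) = x^{-1}\cdot\big(x\psi(x^{-1})y\psi(x)\big) = \psi(x^{-1})y\psi(x)$ is immediate, and then the second coordinate of $R_{\B}$ is $\overline{\psi(x^{-1})y\psi(x)}\circ x\circ y$; expanding $\overline{\cdot}$ via the inverse formula and then applying $\circ$ twice, using $\psi$-homomorphism to collapse the $\psi$-terms, should land on the stated $R_{1,\psi}$. The computations for $R_{\B'}$ (giving $R_{2,\psi}$), $S_{\B}$ (giving $R_{3,\psi}$), and $S_{\B'}$ (giving $R_{4,\psi}$) are structurally the same, the $S$-versions being the $R$-versions of the opposite-operation brace $(G,\circ,\cdot)$ which is again a brace by the bi-skew property; I would present one in moderate detail and assert the others follow identically.

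Next I would handle the three enumerated claims. For (1): $R_{1,\psi}=R_{2,\psi}$ forces equality in the first coordinate, $\psi(g^{-1})h\psi(g) = g\psi(g^{-1})h\psi(g)g^{-1}$, i.e. $g^{-1}\psi(g^{-1})h\psi(g) = \psi(g^{-1})h\psi(g)g^{-1}$ after conjugating, which upon setting $w = \psi(g^{-1})h\psi(g)$ (a bijection in $h$ for fixed $g$) says $g$ commutes with every element of $G$; running over all $g$ gives $G$ abelian, and conversely if $G$ is abelian both expressions reduce to $(h,g)$ since all the $\psi$-terms cancel. For (2): $R_{3,\psi}=R_{4,\psi}$ in the first coordinate reads $\psi(g)h\psi(g^{-1}) = gh\psi(h^{-1})g^{-1}\psi(h)$; I would rearrange this (using that $\psi(h^{-1}),\psi(g^{-1})$ are central within $\psi(G)$ only — so I must be careful and instead multiply out honestly) into the symmetric form $g\psi(g^{-1})h\psi(g) = h\psi(h^{-1})g\psi(h)$, recognizing the two sides as $g\circ h$ and $h\circ g$; so the condition is exactly that $\circ$ is commutative, and I should check the second coordinates match automatically under this hypothesis. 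For (3): $R_{1,\psi}R_{2,\psi}=\id$ and $R_{3,\psi}R_{4,\psi}=\id$ follow at once from the already-cited fact (from \cite{KochTruman20}) that $R_{\B'}$ is the inverse of $R_{\B}$, applied once to $\B_\psi$ and once to the opposite-operation brace $(G,\circ,\cdot)$; I would state this rather than recompute.

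I expect the main obstacle to be purely bookkeeping in the brace-formula substitutions: the second coordinates involve a triple composition $\overline{u}\circ x\circ y$ with $u$ itself a $\circ$-inverse of a product, so each one unwinds into a word of length roughly six or eight in $g,h$ and their $\psi$-images, and the $\psi$-homomorphism/abelian-image simplifications must be applied in exactly the right order to reach the published normal forms. There is real risk of a sign/placement slip (e.g. $\psi(gh^{-1})$ versus $\psi(hg^{-1})$), so I would verify each of the four by an independent sanity check — specialize to $\psi = \id_{Z(G)}$-valued maps, or to the trivial map $\psi \equiv 1_G$ where $g\circ h = h$ and the formulas must collapse to $R_{\B}(g,h) = (g^{-1}hg\cdot\text{stuff})$ matching the almost-trivial-brace example $R_{\B}(x,y)=(y,y^{-1}xy)$ quoted earlier — before declaring the computation complete. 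For claim (2) the subtlety is showing the first-coordinate condition already forces the second coordinates to agree; I would verify that $g\circ h = h\circ g$ for all $g,h$ implies $(G,\circ)$ abelian and then check the $R_4$ second coordinate rewrites to the $R_3$ one using only that identity.
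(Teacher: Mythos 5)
Your proposal is correct and follows the same route as the paper: the four formulas are obtained by direct substitution of $g\cdot h=gh$, $g\circ h=g\psi(g^{-1})h\psi(g)$, and $\overline{g}=\psi(g)g^{-1}\psi(g^{-1})$ into the displayed expressions for $R_{\B},R_{\B'},S_{\B},S_{\B'}$, and (1)--(3) are exactly the standard properties of opposite braces ($\B=\B'$ iff the dot-group is abelian, and $R_{\B'}=R_{\B}^{-1}$), which is all the paper's one-line proof invokes. The only blemishes are in your parenthetical sanity check: for $\psi\equiv 1_G$ one gets $g\circ h=gh$ (not $h$), so $\B_\psi$ is the \emph{trivial} brace, not the almost trivial one; this does not affect the argument.
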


\begin{proof}
	The solutions are straightforward computations, and (1)--(3) follow from properties of opposite braces.
\end{proof}

\section{Five subgroups of \texorpdfstring{$G$}{G} and \texorpdfstring{$N$}{N}, and five sub-Hopf-algebras of \texorpdfstring{$K[N]^G$}{K[N]\^{}G}}

Recall that if $\psi:G \to G$ is a fixed point free abelian map, then we obtain a Hopf Galois structure of type $G$ whose Hopf algebra $L[N]^G$ is isomorphic to $H_{\lambda}$ as $K$-Hopf algebras. Once we allow $\psi$ to have fixed points our structure may no longer be of type $G$, nor need the Hopf algebra be isomorphic to $H_{\lambda}$. We ask: can we determine the type of the Hopf-Galois structure arising from an abelian map $\psi$? In general, there appears to be no easy way to determine the isomorphism type of $N_{\psi}$, however we are able to obtain some results about this group's structure.

We will investigate questions on Hopf-Galois structure through the use of five subgroups of $G$ that depend on $\psi$. Each of these subgroups give rise to a (non-regular) $G$-stable subgroup of $\Perm(G)$, which in turn will give a sub-Hopf algebra of $H=L[N]^G$.

In \cite{KochTruman20b}, the concepts of $\lambda$-points and $\rho$-points were introduced  to investigate questions involving brace equivalence. Given a regular, $G$-stable subgroup $N\le \Perm(G)$ the sets of $\lambda$-points and $\rho$-points, denoted $\Lambda_N$ and $P_N$ respectively, are defined as
\[\Lambda_N=N\cap\lambda(G),\;P_N= N \cap \rho(G).\]
Both $\Lambda_N$ and $P_N$ are subgroups of $N$.
Note that if $N_1$ and $N_2$ are brace equivalent then $\Lambda_{N_1}\cong\Lambda_{N_2}$ and $P_{N_1}\cong P_{N_2}$ \cite[Prop. 6.3]{KochTruman20b}.

First, we let $G_0=\ker \psi,\;N_0=\{\eta_{g_0}:g_0\in G_0\}$. Since
$\eta_{g_0}[h]=g_0\psi(g_0^{-1})h\psi(g_0) = g_0h,\;g_0\in G_0, \;h\in G$ we have $N_0=\lambda(G_0)\le N$. In particular, note that $N_0\cong G_0$, providing us some information as to the structure of $N$. Clearly, $G_0\triangleleft G$, and
$\eta_g\eta_{g_0}\eta_g^{-1}=\eta_g\eta_{{g_0}g^{-1}}=\eta_{g\psi(g^{-1})g_0g^{-1}\psi(g)}$, and since
$\psi(g\psi(g^{-1})g_0g^{-1}\psi(g))=\psi(g_0)=1_G$ we see $g\psi(g^{-1})g_0g^{-1}\psi(g)\in\ker\psi$, hence $N_0\triangleleft N$ as well.

 Since $N_0=\lambda(G_0)$ is $G$-stable, by \cite[Th. 5.2]{GreitherPareigis87} (see also  \cite[Prop. 2.2]{KochKohlTrumanUnderwood19a} for a more explicit formulation), $H_0:=(L[N_0])^G=L[\lambda(G_0)]^G$ is a sub-Hopf algebra of $H$ which is also contained in $H_{\lambda}$.

Generalizing slightly, let $\widehat G_0=\psi^{-1}(Z(G))$. This is evidently a subgroup of $G$ containing $G_0$, and let
$\widehat N_0:=\{\eta_{\hat g_0}:\hat g_0\in \widehat G_0\}$.

\begin{lemma}
	With notation as above, $\widehat N_0 = \Lambda_N$.
\end{lemma}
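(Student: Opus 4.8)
The plan is to prove the two inclusions $\widehat N_0 \subseteq \Lambda_N$ and $\Lambda_N \subseteq \widehat N_0$ directly from the formula $\eta_g[h] = g\psi(g^{-1})h\psi(g)$. For the forward inclusion, take $\hat g_0 \in \widehat G_0$, so $\psi(\hat g_0) \in Z(G)$. Then for any $h \in G$ I compute $\eta_{\hat g_0}[h] = \hat g_0 \psi(\hat g_0^{-1}) h \psi(\hat g_0) = \hat g_0 \psi(\hat g_0^{-1})\psi(\hat g_0) h = \hat g_0 h$, using that $\psi(\hat g_0) = \psi(\hat g_0^{-1})^{-1}$ is central. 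Hence $\eta_{\hat g_0} = \lambda(\hat g_0) \in \lambda(G)$, so $\eta_{\hat g_0} \in N \cap \lambda(G) = \Lambda_N$, giving $\widehat N_0 \subseteq \Lambda_N$.

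For the reverse inclusion, suppose $\eta_g \in \Lambda_N$, i.e. $\eta_g = \lambda(k)$ for some $k \in G$. Evaluating at $h = 1_G$ gives $g = \eta_g[1_G] = \lambda(k)[1_G] = k$, so $\eta_g = \lambda(g)$. Therefore $g\psi(g^{-1}) h \psi(g) = gh$ for all $h \in G$, which simplifies to $\psi(g^{-1}) h \psi(g) = h$ for all $h$, i.e. $\psi(g) \in Z(G)$ (equivalently $\psi(g^{-1}) = \psi(g)^{-1} \in Z(G)$). Thus $g \in \psi^{-1}(Z(G)) = \widehat G_0$, so $\eta_g \in \widehat N_0$, and $\Lambda_N \subseteq \widehat N_0$.

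Combining the two inclusions yields $\widehat N_0 = \Lambda_N$. I do not anticipate a serious obstacle here: the argument is a short direct computation, the only subtlety being the routine bookkeeping that $\psi(g) \in Z(G)$ iff $\psi(g^{-1}) \in Z(G)$ and that conjugation by a central element is trivial. The step most worth stating carefully is the evaluation at $1_G$, which pins down that an element of $\Lambda_N$ must be exactly $\lambda(g)$ (rather than $\lambda(k)$ for some unrelated $k$), so that the defining relation of $\eta_g$ can then be compared termwise with left translation by $g$.
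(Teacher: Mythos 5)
Your proof is correct and follows essentially the same route as the paper's: the forward inclusion by computing $\eta_{\hat g_0}[h]=\hat g_0 h$ using centrality of $\psi(\hat g_0)$, and the reverse inclusion by evaluating $\eta_g=\lambda(k)$ at $1_G$ to force $k=g$ and then concluding $\psi(g)\in Z(G)$. No issues.
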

\begin{proof}
	For $\hat g_0\in \widehat G_0, \;h\in G$ we have
	$\eta_{\hat g_0}[h]=\eta_{\hat g_0 \psi(\hat g_0^{-1}) h \psi(\hat g_0)}=\hat g_0 h$
	so clearly $\widehat N_0 \subset \Lambda_N$. Conversely, if $\eta_g\in \Lambda_N$ then $\eta_g=\lambda(k)$ for some $k\in G$: evaluating this expression at $1_G$ shows $g=k$. Thus,
	$\eta_g[h]=g\psi(g^{-1})h\psi(g) = gh$,
	so $\psi(g^{-1})h\psi(g) = h$ for all $h\in G$. This can only occur if $\psi(g)\in Z(G)$.
\end{proof}

As both $N$ and $\lambda(G)$ are $G$-stable, so is $\widehat N_0$. Thus, $\widehat H_0 := (L[\widehat N_0])^G$ is a sub-Hopf algebra of $H$. It is precisely the sub-Hopf algebra of $H_{\lambda}$ obtained by restricting to $\widehat G_0$, i.e., $\widehat H_0 = L[\lambda(\widehat G_0)]^G$. 

Next, let $G_1=\{g_1\in G:\psi(g_1)=g_1\}$ be the subgroup of fixed points. Clearly $G_1$ is abelian, although typically $G_1$ is not normal in $G$. If we let $N_1=\{\eta_{g_1}:g_1\in G_1\}$ then $\eta_{g_1}[h]=g_1g_1^{-1}hg_1 = hg_1$
for all $h\in G$, hence $\eta_{g_1}=\rho(g_1^{-1})$. Thus $N_1 = \rho(G_1)$ is a subgroup of $N$ isomorphic to $G_1$.

We also have
$^k\eta_{g_1} = \eta_{kg_1\psi(g_1^{-1})k^{-1}\psi(g_1)}=\eta_{\psi(g_1)}$,
so $G$ acts trivially on $N_1$. Thus, $H_1=(L[N_1])^G=K[G_1]$ is a sub-Hopf algebra of $H$. 

Generalizing again, let $\phi:G\to G$ be given by $\phi(g)=g\psi(g^{-1})$ for all $g\in G$. Define $\widehat G_1 = \{\hat g\in G: \phi(\hat g)\in Z(G)\}$.  As $\phi$ is trivial on fixed points we clearly have $G_1 \le \widehat{G}_1$. Since 
$\phi(\hat g \hat h)= \hat g\hat h \psi(\hat h^{-1} \hat g^{-1}) = \hat g \phi(\hat h) \psi(\hat g^{-1}) = \phi(\hat g)\phi(\hat h)$
for $\hat h\in \widehat G_1$ we see this is in fact a subgroup of $G$. If we define
$\widehat N_1 = \{\eta_{\hat g_1}:\hat g_1\in \widehat G_1\}$
we get
\begin{lemma}
	With notation as above, $\widehat N_1 = P_N$.
\end{lemma}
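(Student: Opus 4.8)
The plan is to mirror the proof of the previous lemma (that $\widehat N_0 = \Lambda_N$), replacing the $\lambda$-side analysis with its $\rho$-side counterpart. Recall $P_N = N \cap \rho(G)$, so I must show that $\eta_{\hat g_1}$ is a right translation exactly when $\hat g_1 \in \widehat G_1$, i.e. when $\phi(\hat g_1) = \hat g_1 \psi(\hat g_1^{-1}) \in Z(G)$.

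First I would prove the inclusion $\widehat N_1 \subseteq P_N$. Take $\hat g_1 \in \widehat G_1$ and compute, for arbitrary $h \in G$,
\begin{align*}
\eta_{\hat g_1}[h] &= \hat g_1 \psi(\hat g_1^{-1}) h \psi(\hat g_1) \\
&= \phi(\hat g_1)\, h\, \psi(\hat g_1) \\
&= h\, \phi(\hat g_1)\, \psi(\hat g_1),
\end{align*}
where the last step uses $\phi(\hat g_1) \in Z(G)$. Since $\phi(\hat g_1)\psi(\hat g_1) = \hat g_1 \psi(\hat g_1^{-1})\psi(\hat g_1) = \hat g_1$, this reads $\eta_{\hat g_1}[h] = h \hat g_1 = \rho(\hat g_1^{-1})[h]$, so $\eta_{\hat g_1} \in \rho(G) \cap N = P_N$.

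Conversely, suppose $\eta_g \in P_N$, so $\eta_g = \rho(k)$ for some $k \in G$. Evaluating at $1_G$ gives $g = \eta_g[1_G] = \rho(k)[1_G] = k^{-1}$, so $\eta_g = \rho(g^{-1})$, meaning $\eta_g[h] = hg$ for all $h$. On the other hand $\eta_g[h] = g\psi(g^{-1})h\psi(g) = \phi(g)\,h\,\psi(g)$. Setting $\phi(g)h\psi(g) = hg = h\,\phi(g)\psi(g)$ for all $h\in G$ and cancelling $\psi(g)$ on the right yields $\phi(g)\,h = h\,\phi(g)$ for all $h$, i.e. $\phi(g) \in Z(G)$, so $g \in \widehat G_1$ and $\eta_g \in \widehat N_1$.

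There is no real obstacle here; the only point requiring a little care is the algebraic bookkeeping with $\phi$ — specifically the identity $\phi(g)\psi(g) = g$ and the fact that $\phi$ being $Z(G)$-valued on $g$ is equivalent to $\psi(g^{-1})\cdot(\text{conjugation})$ collapsing to right multiplication by $g$. Everything else is the same bijection-and-evaluation argument used for $\widehat N_0 = \Lambda_N$, and the subgroup property of $\widehat G_1$ and $G$-stability of $\widehat N_1$ have already been addressed in the surrounding text (indeed $\widehat N_1$ is $G$-stable since both $N$ and $\rho(G)$ are).
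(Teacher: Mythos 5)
Your proof is correct and follows essentially the same route as the paper: the forward inclusion by computing $\eta_{\hat g_1}[h]=\phi(\hat g_1)h\psi(\hat g_1)=h\hat g_1$ using $\phi(\hat g_1)\in Z(G)$, and the converse by evaluating $\eta_g=\rho(k)$ at $1_G$ and cancelling to get $\phi(g)h=h\phi(g)$. No issues.
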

\begin{proof}
	For $\hat g_1\in \widehat G_1$ we have
	$\eta_{\hat g_1}[h] = {\hat g_1 \psi(\hat g_1^{-1}) h \psi( \hat g_1)} = {\phi(\hat g_1) h \psi (\hat g_1)} = {h \phi(\hat g_1)\psi(\hat g_1)}=h\hat g_1$
	hence $\eta_{\hat g_1}=\rho(\hat g_1^{-1})\in P_N$. Conversely, suppose $\eta_{ g}=\rho(k)$. By evaluating at $1_G$ we see $k=g^{-1}$, hence for all $h\in G$ we have
	\begin{align*}\eta_g[h]=g\psi(g^{-1})h\psi(g) &= hg\\ 
	g\psi(g^{-1}) h &= hg\psi(g^{-1})\\
	\phi(g) h &=h \phi(g),
	\end{align*}
	thus $\phi(g)\in Z(G)$, i.e., $g\in\widehat G_1$. Therefore, $\eta_g\in \widehat{N}_1$.
\end{proof}

Since $^k\eta_{\hat g_1} = \eta_{k\phi(\hat g_1)k^{-1}\psi(\hat g_1)}= \eta_{\hat g_1}$ we see that $G$ acts trivially on $\widehat N_1$. Then $\widehat H_1:=(L[\widehat{N}])^G = K[\widehat G_1]$ is a sub-Hopf algebra of $H$. 
In fact, $\widehat H_1$ is the largest group ring contained in $H$.

Finally, note that $G_0\cap G_1$ is trivial, hence $N_0\cap N_1 = \{1_N\}$. Since $G_0\triangleleft G$ and $N_0\triangleleft N$ we get \[G_{01}:=G_0G_1=\{g_0g_1: g_0\in G_0,\;g_1\in G_1\}\le G, \; N_{01}:=N_0N_1=\{\eta_{g_0g_1}: g_0\in G_0,\;g_1\in G_1\}\le N.\]
We have
$\eta_{g_0g_1}[h]=g_0g_1\psi(g_1^{-1}g_0^{-1})h\psi(g_0g_1)=g_0hg_1$
hence $\eta_{g_0g_1}=\lambda(g_0)\rho(g_1^{-1})$. The group $N_{01}$ is evidently $G$-stable, giving rise to another sub-Hopf algebra $H_{01}:=(L[N_{01}])^G \cong (L[N_0])^G \otimes K[G_1]$. 

The construction of the subgroups above allow us to obtain the following. Note the relationship between the following result and example \ref{nc}.

\begin{proposition}\label{xprod}
	Let $\psi:G\to G$ be abelian, and let $G_0,G_1,N_0,N_1$ be as above. Then $N_{\psi}$ has a subgroup isomorphic to $G_0\times G_1$. In particular, if $|G_0||G_1|=|G|$ then $N_{\psi}\cong G_0\times G_1$. 
\end{proposition}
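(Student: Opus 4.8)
The plan is to build the claimed subgroup of $N_\psi$ as the product $N_{01} = N_0 N_1$ already constructed just before the statement, and to verify that the natural map $G_0 \times G_1 \to N_{01}$ sending $(g_0, g_1) \mapsto \eta_{g_0 g_1}$ is a group isomorphism. We already know from the discussion above that $\eta_{g_0 g_1} = \lambda(g_0)\rho(g_1^{-1})$, that $G_0 \cap G_1 = \{1_G\}$ (since a nontrivial element of $\ker\psi$ cannot be a fixed point of $\psi$), and that $N_0 = \lambda(G_0)$, $N_1 = \rho(G_1)$ are both subgroups of $N_\psi$.

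First I would check that $N_{01}$ really is a subgroup of $N_\psi$ and that it has the right order. Since $G_0 \triangleleft G$, we have $\lambda(G_0) \triangleleft \lambda(G)$, and $\lambda(G_0)$ and $\rho(G_1)$ commute with each other inside $\Perm(G)$ (as $\lambda(g)\rho(h) = \rho(h)\lambda(g)$ always); hence $N_{01} = \lambda(G_0)\rho(G_1)$ is a subgroup of $\Perm(G)$, and it is contained in $N_\psi$ because each $\eta_{g_0 g_1} \in N_\psi$ by definition. Moreover $\lambda(G_0) \cap \rho(G_1) = \{1_N\}$: an element in the intersection is both $\lambda(g_0)$ and $\rho(g_1^{-1})$, and evaluating at $1_G$ gives $g_0 = g_1^{-1}$, an element of $G_0 \cap G_1 = \{1_G\}$. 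Therefore $|N_{01}| = |G_0||G_1|$.

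Next I would exhibit the isomorphism $G_0 \times G_1 \cong N_{01}$. Because $\lambda(G_0)$ is normal in $\lambda(G)$ and in particular is normalized by $\rho(G_1) \le \rho(G) = \lambda(G)'$ — here it is cleaner to note directly that $\rho$ and $\lambda$ commute, so $\rho(G_1)$ centralizes $\lambda(G_0)$ — the product $\lambda(G_0)\rho(G_1)$ is an internal direct product of the two commuting subgroups with trivial intersection. Concretely, the map $G_0 \times G_1 \to N_{01}$, $(g_0, g_1) \mapsto \lambda(g_0)\rho(g_1^{-1}) = \eta_{g_0 g_1}$, is a homomorphism since $\lambda$ is a homomorphism, $\rho$ is an anti-homomorphism (so $g_1 \mapsto \rho(g_1^{-1})$ is a homomorphism), and the two images commute; it is surjective by definition of $N_{01}$ and injective by the triviality of the intersection. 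This proves $N_\psi \supseteq N_{01} \cong G_0 \times G_1$, giving the first assertion.

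Finally, for the ``in particular'' clause: if $|G_0||G_1| = |G|$, then $|N_{01}| = |G_0||G_1| = |G| = |N_\psi|$, and since $N_{01} \le N_\psi$ with equal (finite) cardinality we get $N_{01} = N_\psi$, hence $N_\psi \cong G_0 \times G_1$. I do not anticipate a real obstacle here; the only point requiring a moment's care is the commuting of $\lambda(G_0)$ with $\rho(G_1)$ and the trivial-intersection check, both of which are immediate from facts already recorded in the excerpt. One could alternatively verify the homomorphism property by the direct computation $\eta_{g_0 g_1}\eta_{g_0' g_1'} = \eta_{(g_0 g_0')(g_1 g_1')}$ using $\eta_a \eta_b = \eta_{a\psi(a^{-1})b\psi(a)}$ together with $\psi(g_0) = \psi(g_0') = 1_G$ and $\psi(g_1) = g_1$, $\psi(g_1') = g_1'$, but routing through $\lambda, \rho$ is shorter.
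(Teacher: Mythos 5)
Your proof is correct and takes essentially the same route as the paper's: both realize the subgroup as $N_{01}=N_0N_1=\lambda(G_0)\rho(G_1)$ and deduce the internal direct product from the commutation of $\lambda$ with $\rho$ together with $G_0\cap G_1=\{1_G\}$, with the ``in particular'' clause following by counting. (One cosmetic point: whether $\rho$ or $g\mapsto\rho(g^{-1})$ is the homomorphism depends on the sign convention for $\rho$, but since $G_1$ is abelian this is immaterial to your argument.)
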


\begin{proof}
	The work above creates the subgroup $N_{0,1}=N_0N_1$. That $N_{0,1}\cong N_0\times N_1$ follows from
	$\eta_{g_0g_1}\eta_{h_0h_1} = \lambda(g_0)\rho(g_1^{-1})\lambda(h_0)\rho(h_1^{-1}) = \lambda(g_0h_0)\rho(g_1h_1) = \eta_{g_0h_0g_1h_1}$.
\end{proof}

We conclude with an investigation of dihedral groups.
Let $G=D_n=\gen{r,s:r^n=s^2=rsrs=1_G}.$ We will find all abelian maps on $G$ and determine the type of each Hopf-Galois structure. Suppose $\psi:G \to G$ is an abelian map. Since
		$\psi(r)\psi(s)=\psi(rs)=\psi(sr)=\psi(r^{-1}s)=\psi(r^{-1})\psi(s)$ we know that $\psi(r)$ must be an element whose order divides both $2$ and $n$. 
	We will examine two cases, based on the parity of $n$.
	
	Suppose first that $n$ is odd. Then $\psi(r)=1_G$. Letting $\psi(s)=r^is$ for some $0\le i \le n-1$ gives an abelian map, and it is clear that every nontrivial abelian map is of this form. Since $Z(D_n)=\{1_G\}$ each choice of $i$ gives a different regular, $G$-stable subgroup. For each, $G_0=\gen{r}$ and $G_1=\gen{r^is}$, so the resulting Hopf-Galois structure is of type $C_n\times C_2$. Thus we have $n+1$ Hopf-Galois structures, one of type $D_n$ and $n$ of type $C_n\times C_2$.
	
	Now suppose $n$ is even. Then $\psi(r)=1_G, r^{n/2}$, or $r^is$ for some $0\le i \le n-1$; and $\psi(s)=1_G, r^{n/2}$, or $r^js$ for some $0\le j \le n-1$. However, since $r^{n/2}\in Z(D_n)$ we need only study the following cases:
	\begin{description}
		\item[Case 1. $\psi(r)=\psi(s)=1_G$]Then $\psi=\mathrm{id}$, and the Hopf-Galois structure is of type $D_n$.
		\item[Case 2. $\psi(r)=1_G,\psi(s)=r^js,\;0\le j\le (n/2)-1$] As in the case $n$ is odd we get $G_0=\gen{r}$ and $G_1=\gen{r^js}$, hence we get $n/2$ Hopf-Galois structures of type $C_n\times C_2$.
		\item[Case 3. $\psi(r)=r^is,\psi(s)=1_G,\;0\le i \le (n/2)-1$] Since $\psi(r^is) = (r^is)^i$ we see that $r^is$ is a fixed point if an only if $i$ is odd. If $i$ is even then $\psi$ is fixed point free, hence the corresponding Hopf-Galois structure is of type $D_n$. On the other hand, if $i$ is odd, then $G_0=\gen{r^2,s}\cong D_{n/2}$ (note $D_2\cong C_2\times C_2$) and $G_1=\gen{r^is}$, hence the Hopf-Galois structure is of type $D_{n/2}\times C_2$. 
		Overall, this case gives $n/4$ Hopf-Galois structures of type $D_n$ and $n/4$ Hopf-Galois structures of type $D_{n/2}\times C_2$ if $n\equiv 0 \pmod 4$; and $n/2$ Hopf-Galois structures of type $D_n$ if $n\equiv 2\pmod 4$ (since $D_{n/2}\times C_2\cong D_n$).
		\item[Case 4. $\psi(r)=\psi(s)=r^is,\;0\le i\le (n/2)-1$] (Note that $\psi(r)=r^is,\;\psi(s)=r^js,\;0\le i,j\le (n/2)-1$ is abelian if and only if $i=j$.) In this case, $r^is$ is a fixed point if and only if $i$ is even. Thus, for $i$ odd we get a Hopf-Galois structure of type $D_n$. On the other hand, if $i$ is even then $G_0=\gen{r^2,rs}\cong D_{n/2},\;G_1=\gen{r^is}$, and the Hopf-Galois structure is of type $D_{n/2}\times C_2$.
	\end{description}
		The Hopf-Galois structure types are summarized in the following table. The rightmost column counts the total number of regular, $G$-stable subgroups found using abelian maps directly or through their opposite structures.
		
		\[\begin{array}{c | c | c | c | c | c}
		n & \text{\# type } D_n & \text{\# type }D_{n/2}\times C_2 & \text{\# type }C_{n}\times C_2 & \text{ \# abelian maps} & \text{\# HGS}\\\hline
		2\mid n  & 1+n/2  & n/2 & n/2 &  1+3n/2 & 2+5n/2\\\hline
		2\nmid n & 1 & \text{N/A} & n& 1+n & 2+n\\
		\end{array}
		 \]
		 
		 The number of structures of type $D_n$ (or $D_{n/2}\times C_2$ if $n\equiv 2\pmod 4$) arising from fixed point free abelian maps agrees with the results found in \cite[\S 5]{Childs13} before opposites are considered.

\bibliographystyle{alpha} 
\bibliography{../../MyRefs}
\end{document}